%%%%%%%%%%%%%%%%%%%%%%% file template.tex %%%%%%%%%%%%%%%%%%%%%%%%%
%
% This is a general template file for the LaTeX package SVJour3
% for Springer journals.          Springer Heidelberg 2010/09/16
%
% Copy it to a new file with a new name and use it as the basis
% for your article. Delete % signs as needed.
%
% This template includes a few options for different layouts and
% content for various journals. Please consult a previous issue of
% your journal as needed.
%
%%%%%%%%%%%%%%%%%%%%%%%%%%%%%%%%%%%%%%%%%%%%%%%%%%%%%%%%%%%%%%%%%%%
%
% First comes an example EPS file -- just ignore it and
% proceed on the \documentclass line
% your LaTeX will extract the file if required
% [arxiv_v2: filecontents example.eps stripped, 188 chars]
\RequirePackage{fix-cm}
\documentclass[smallextended]{svjour3}       % onecolumn (second format)
\smartqed  % flush right qed marks, e.g. at end of proof
\usepackage{amsmath,graphicx,amsfonts,amscd,bezier}
\usepackage{makeidx}
\usepackage[latin1]{inputenc}
%\usepackage[active]{srcltx}
%\usepackage{latexsym}
% etc.
%
% please place your own definitions here and don't use \def but
% \newcommand{}{}
%
% Insert the name of "your journal" with
\journalname{Constructive Approximation}
\begin{document}

\title{Integral identities derived from the complex Funk-Hecke formula%\thanks{Grants or other notes
%about the article that should go on the front page should be
%placed here. General acknowledgments should be placed at the end of the article.}
}
%\subtitle{Do you have a subtitle?\\ If so, write it here}

%\titlerunning{Short form of title}        % if too long for running head

\author{C. P. Oliveira \and Jorge Buescu }

%\authorrunning{Short form of author list} % if too long for running head

\institute{Universidade Federal de Itajub\'{a} - Instituto de Matem\'{a}tica e Computa\c{c}\~{a}o.\\ Caixa Postal 50, 37500-903. Itajub\'{a} - MG, Brasil.\\ \email{oliveira@unifei.edu.br} \\
Universidade de Lisboa - Dep. Matematica and CMAF.\\
\email{jbuescu@ptmat.fc.ul.pt}}

\date{Received: date / Accepted: date}
% The correct dates will be entered by the editor

\maketitle

\begin{abstract}
In this paper we derive integral identities involving both the unit sphere and the unit disk or subsets thereof. In addition these identities lead to a prototype of the Funk-Hecke formula for subspheres embedded in $\Omega_{2q}$. The technique requires the use of the complex Funk-Hecke formula, where eigenvalues of the integral operator generated by a bizonal kernel on the unit sphere $\Omega_{2q}$ of $\mathbb{C}^q$ are given by an integral on the closed unit disk $B_q$ of $\mathbb{C}^{q-1}$.
\subclass{32A17, 32A25, 32A50, 33C55, 47A13, 43A90, 42B35, 45P05.}
\keywords{ addition formula, Funk-Hecke formula, orthogonal polynomials, spherical harmonic, zonal function.}
% \PACS{PACS code1 \and PACS code2 \and more}
%\subclass{MSC code1 \and MSC code2 \and more}
\end{abstract}

\section{Introduction} \label{intro}

Let $q$ be  a positive integer. We consider the unitary space $\mathbb{C}^q$ equipped with the usual inner product
given by
\begin{eqnarray*}\label{1.1}\langle z,w \rangle := z_1\overline{w}_1 + z_2\overline{w}_2+\cdots +
z_q\overline{w}_q,\quad z,w \in \mathbb{C}^q,\end{eqnarray*} where we write $z=(z_1,z_2,\ldots,z_q)$ and $w=(w_1,w_2,\ldots,w_q)$.

Throughout the paper,  $|\cdot|$ denotes the usual absolute value in $\mathbb{C}$, while $\|\cdot \|$ stands for the norm induced by the inner product $\langle \cdot, \cdot\rangle$.

The unit disk and the unit sphere of $(\mathbb{C}^q, \langle\cdot, \cdot \rangle)$, both centered at the origin, will appear frequently in the text. These are defined respectively by
\begin{eqnarray*}\label{1.2}B_{q+1}:=\{z \in \mathbb{C}^q\,:\, \langle z, z \rangle \leq 1\},\quad \Omega_{2q} := \{z\in\mathbb{C}^q : \langle z,z\rangle
=1\}. \end{eqnarray*}
Observe in particular that, with this notation, $B_2$ stands for the unit disk in $\mathbb{C}$, whereas $ \Omega_{2}$
stands for the unit circle in $\mathbb{C}$.

We will deal with spaces of polynomials in the variables $z$ and $\overline{z}$ of $\mathbb{C}^q$, where $\overline{z}$ denotes complex conjugation. The most general  is $\mathbb{P}(\mathbb{C}^q)$, the space of polynomials in the independent variables
$z$ and $\overline{z}$. Thus, when $\alpha$ and $\beta$ are multi-indices with nonnegative integer coordinates, an element $p$ in $\mathbb{P}(\mathbb{C}^q)$ may be expressed as
\begin{eqnarray*}\label{1.3}p(z):=\sum_{|\alpha|\leq m}\sum_{|\beta|\leq n}
c_{\alpha,\beta}z^\alpha\overline{z}^\beta, \quad c_{\alpha,\beta}\in\mathbb{C}, \quad \alpha,\beta \in
\mathbb{Z}^q_+, \end{eqnarray*} where $m$ and $n$ are nonnegative integers.
We denote the subspace of $\mathbb{P}(\mathbb{C}^q)$ formed by homogeneous polynomials of degree $m$ in $z$ and degree $n$ in $\overline{z}$ by $\mathbb{P}_{m,n}(\mathbb{C}^q)$. Hence,
\begin{eqnarray*}\label{1.4}p(\lambda z)=\lambda^m \overline{\lambda}^{\,n} p(z), \quad \lambda \in \mathbb{C},\quad p \in \mathbb{P}_{m,n}(\mathbb{C}^q). \end{eqnarray*}
The solid harmonics are given by the subspace $\mathbb{H}_{m,n}(\mathbb{C}^q)$ of $\mathbb{P}_{m,n}(\mathbb{C}^q)$. They are formed by the polynomials belonging to the kernel of the complex Laplace operator
\begin{eqnarray*} \label{eqn1.5}{\Delta}_{(2q)}:=4
\sum_{j=1}^{q}\frac{\partial^2}{\partial z_{j}\partial
\overline{z}_j}. \end{eqnarray*} Equivalently,
\begin{eqnarray*} \label{eqn1.6}\mathbb{H}_{m,n}(\mathbb{C}^q) := \mathbb{P}_{m,n}(\mathbb{C}^q) \cap \mbox{Ker} \Delta_{(2q)}.\end{eqnarray*}
The last unitary space that we introduce is $\mathcal{H}_{m,n}(\Omega_{2q})$. This is the space of spherical harmonics on $\Omega_{2q}$.
A complex spherical harmonic is the restriction to $\Omega_{2q}$ of an element of $\cup_{m=0}^{\infty}\cup_{n=0}^{\infty} \mathbb{H}_{m,n}(\mathbb{C}^q)$. We will write $\mathcal{H}_{m,n}(\Omega_{2q})$ to denote the space of complex spherical harmonics that are restrictions  to $\Omega_{2q}$
of elements of $\mathbb{H}_{m,n}(\mathbb{C}^q)$.
The dimension of this space is the finite number $d(m,n)$ (\cite{koorn}). When necessary
\begin{eqnarray*} \label{eqn1.7}\left\{Y^j_{m,n} \, : \, j=1, \ldots, d(m,n)\right\} \end{eqnarray*}
will denote an orthonormal basis of $\mathcal{H}_{m,n}(\Omega_{2q})$ with respect to the usual
 inner product of $L^2(\Omega_{2q}, d\sigma_q)$ given by
\begin{eqnarray*}\langle f, g \rangle_2 = \int_{\Omega_{2q}} f(z) \overline{g(z)}d\sigma_q(z), \quad f, g \in L^2(\Omega_{2q},d\sigma_q).\label{eqno(1.8)}\end{eqnarray*}
The Borel measure $\sigma_q$ on $\Omega_{2q}$ is normalized by the condition
\begin{eqnarray}\omega_q:=\int_{\Omega_{2q}} d\sigma_q(z) = \frac{2\pi^q}{(q-1)!}. \label{eqno(1.9)}\end{eqnarray}
Thus, the constant $\omega_q$ is the ordinary measure of the sphere $\Omega_{2q}$. We recall that $\sigma$ is invariant with respect to isometric operators on $\mathbb{R}^{2q}$ fixing the origin. It is useful to recall that the Lebesgue measure $dz$ on $B_{q +1}$ is such that
\begin{eqnarray*}\int_{B_{q +1}} dz = \frac{\omega_q}{2q} = \frac{\pi^q}{q!}. \label{eqno(1.10)}\end{eqnarray*}
For more details on complex spherical analysis we recommend references \cite{Boyd}, \cite{koorn} and \cite{Koorn2}.

This paper is organized as follows. We reserve Section 2 to address the technical results of harmonic analysis that will be needed for the proof of the main results. In Section 3, we present some comments on the historical origin of the Funk-Hecke formula, a proof of which in our context is also supplied. In Section 4, we use coe\-fficients of the Funk-Hecke formula given by an integral on the closed unit disk $B_q$ obtained in the Section 3 to find integral identities, some of which are known while others are entirely new. As a result, we derive a version of the Funk-Hecke formula for subspheres embedded in $\Omega_{2q}$.

\section{Spherical harmonic analysis}
\label{sec:1}

In this section we review some basic results from spherical analysis. We refer the reader to  references (\cite{Ikeda}, \cite{Ikeda2}, \cite{Ikeda3}, \cite{IkedaKayama}, \cite{koorn}, \cite{Koorn2}, \cite{szego}) for more thorough information on this subject.

We begin with the following lemma which lists some basic properties of
 disk polynomials.  Recall that disk polynomials are only defined for $q> 1$, so whenever they appear
we are always assuming this hypothesis. The disk polynomial of degree $m$ in $z$ and of degree $n$ in $\overline{z}$ associated to the integer $q-2$ is the
function $R_{m,n}^{q-2}$ defined by
\begin{eqnarray*}R_{m,n}^{q-2}(re^{i\theta}):= r^{|m-n|}
e^{i(m-n)\theta}P_{m\wedge n}^{(q-2,|m-n|)}(2r^2-1), \quad re^{i\theta} \in B_2, \label{eqno(2.1)}\end{eqnarray*}
where $m\wedge n := \min \{m,n\}$ and $P_k^{(q-2,m-n)}$ is the usual Jacobi polynomial of degree $k$ associated to the integers $q-2$, $m-n$ normalized by $P_k^{(q-2,m-n)}(1)=1$, $k=0, 1, \ldots$. As a consequence, disk polynomials satisfy the normalization condition $R_{m,n}^{q-2}(1)=1$, $m,n=0,1, \ldots$.

The following properties are now immediate from the above definition.

\begin{lemma} \label{lem(2.1)} Let $q>1$ be an integer and $m$, $n$ nonnegative integers. Then the following properties hold:\\
{\bf 1.} $R_{m,n}^{q-2}(e^{i\theta}z) =
 e^{i(m-n)\theta}R_{m,n}^{q-2}(z),\,\, \theta \in[0,2\pi),\,\,
 z\in B_2 $;\\
{\bf 2.} $|R_{m,n}^{q-2}(z)| \leq 1$, $z\in B_2$;\\
{\bf 3.} $R_{m,n}^{q-2}(\overline{z}) =
 \overline{R_{m,n}^{q-2}(z)} = R_{n,m}^{q-2}(z),\,\, z\in
 B_2$.\end{lemma}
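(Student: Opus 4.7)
My plan is to verify the three identities by direct substitution into the defining formula
$R_{m,n}^{q-2}(re^{i\theta}) = r^{|m-n|} e^{i(m-n)\theta} P_{m \wedge n}^{(q-2,|m-n|)}(2r^2-1)$,
treating properties 1 and 3 as bookkeeping and concentrating the real work on property 2.

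For property 1, I would write a generic point as $z = re^{i\phi}$. Then $e^{i\theta}z = re^{i(\theta+\phi)}$, so inserting this into the definition leaves the modulus $r$ and the real argument $2r^2-1$ (hence the Jacobi-polynomial factor $P_{m\wedge n}^{(q-2,|m-n|)}(2r^2-1)$) unchanged, while the angular factor $e^{i(m-n)(\theta+\phi)}$ splits off exactly the desired prefactor $e^{i(m-n)\theta}$.

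For property 3, I would use $\overline{re^{i\theta}} = re^{-i\theta}$. Since $P_{m\wedge n}^{(q-2,|m-n|)}$ has real coefficients and $2r^2-1$ is real, conjugation affects only the angular exponent $e^{i(m-n)\theta}$, which establishes simultaneously that $R_{m,n}^{q-2}(\overline z) = \overline{R_{m,n}^{q-2}(z)}$. The equality with $R_{n,m}^{q-2}(z)$ follows from the symmetries $|m-n| = |n-m|$ and $m \wedge n = n \wedge m$ (which preserve both the radial and Jacobi factors) together with $e^{i(n-m)\theta} = e^{-i(m-n)\theta}$.

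Property 2 is where I expect the main obstacle. The estimate reduces immediately to $|P_{m\wedge n}^{(q-2,|m-n|)}(2r^2-1)| \leq 1$ on $[-1,1]$ in the normalization $P(1)=1$, because $r^{|m-n|} \leq 1$ for $r\in[0,1]$. The catch is that the classical Szeg\H{o} bound delivers this estimate only when the first Jacobi parameter dominates the second, i.e.\ when $q-2 \geq |m-n|$, which is not always the case. The cleanest way around this is to invoke the role of $R_{m,n}^{q-2}$ as the zonal (reproducing) kernel of $\mathcal{H}_{m,n}(\Omega_{2q})$: once the addition formula
$R_{m,n}^{q-2}(\langle u, v\rangle) = \frac{\omega_q}{d(m,n)}\sum_{j=1}^{d(m,n)} Y_{m,n}^j(u)\,\overline{Y_{m,n}^j(v)}$
is available, Cauchy--Schwarz combined with the diagonal evaluation $R_{m,n}^{q-2}(1) = 1$ yields $|R_{m,n}^{q-2}(\langle u,v\rangle)| \leq 1$ for all $u,v\in\Omega_{2q}$. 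Since $\{\langle u,v\rangle : u,v\in\Omega_{2q}\}$ exhausts $B_2$, this extends the bound to every $z \in B_2$ and completes the lemma.
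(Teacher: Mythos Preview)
The paper does not give a proof of this lemma; it simply asserts that the three properties ``are now immediate from the above definition'' and moves on. Your treatment of properties 1 and 3 is exactly the direct substitution that this phrase is meant to convey, so there is nothing to compare there.

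For property 2 you go further than the paper does, and correctly so. You are right that the naive reduction to a uniform Jacobi bound $|P_{m\wedge n}^{(q-2,|m-n|)}(x)|\le 1$ on $[-1,1]$ is not guaranteed by the standard Szeg\H{o} inequality once $|m-n|>q-2$; the point is that the full product $r^{|m-n|}P_{m\wedge n}^{(q-2,|m-n|)}(2r^2-1)$ stays bounded by $1$, and the cleanest way to see this is precisely your addition-formula/Cauchy--Schwarz argument. The only thing worth flagging is that in the paper the addition formula is stated \emph{after} Lemma~\ref{lem(2.1)}, so your argument for part 2 formally appeals to the next lemma; but since both statements are quoted from the literature (Koornwinder) rather than proved here, there is no genuine circularity.
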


Disk polynomials are connected to spherical harmonics in the following way (\cite{koorn}).

\begin{lemma}[Addition Formula]Let $q>1$ be an integer. If $m$ and $n$ are nonnegative integers, then
\begin{eqnarray}
\label{eq_addition_formula}
R_{m,n}^{q-2}(\langle z, w\rangle) = \frac{\omega_q}{d(m,n)} \sum_{j=1}^{d(m,n)} Y_{m,n}^j(z)\overline{Y_{m,n}^j(w)},\quad z, w \in \Omega_{2q}.
\end{eqnarray}
\end{lemma}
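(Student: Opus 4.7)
The plan is to recognize the right-hand side as (a scalar multiple of) the reproducing kernel of $\mathcal{H}_{m,n}(\Omega_{2q})$, force it to depend only on $\langle z,w\rangle$ by exploiting unitary invariance, and then match the resulting function of one complex variable with $R_{m,n}^{q-2}$ via a uniqueness argument together with a trace calculation for the constant.

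Set $K_{m,n}(z,w):=\sum_{j=1}^{d(m,n)} Y_{m,n}^j(z)\overline{Y_{m,n}^j(w)}$. The first observation is that $K_{m,n}$ is the reproducing kernel of the finite-dimensional Hilbert space $\mathcal{H}_{m,n}(\Omega_{2q})\subset L^2(\Omega_{2q},d\sigma_q)$, so it is independent of the chosen orthonormal basis. Next, because $\sigma_q$ is $U(q)$-invariant and both bi-homogeneity and harmonicity are preserved by composition with $U\in U(q)$, the family $\{Y_{m,n}^j\circ U\}$ is again orthonormal in $\mathcal{H}_{m,n}(\Omega_{2q})$, whence $K_{m,n}(Uz,Uw)=K_{m,n}(z,w)$.

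Fixing $w\in\Omega_{2q}$, the stabilizer of $w$ in $U(q)$, which is isomorphic to $U(q-1)$, acts transitively on each level set of $z\mapsto\langle z,w\rangle$ in $\Omega_{2q}$, so the unitary invariance of $K_{m,n}$ forces $K_{m,n}(z,w)=f(\langle z,w\rangle)$ for some function $f\colon B_2\to\mathbb{C}$. The bi-homogeneity $Y_{m,n}^j(\lambda z)=\lambda^m\overline{\lambda}^n Y_{m,n}^j(z)$ gives $K_{m,n}(z,e^{i\theta}w)=e^{i(n-m)\theta}K_{m,n}(z,w)$; combined with $\langle z,e^{i\theta}w\rangle=e^{-i\theta}\langle z,w\rangle$, this pins down $f(re^{i\phi})=e^{i(m-n)\phi}f(r)$, matching exactly the angular structure of $R_{m,n}^{q-2}$.

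To identify the radial part with the Jacobi factor, I would use that $z\mapsto K_{m,n}(z,w)$ lies in $\mathcal{H}_{m,n}(\Omega_{2q})$ and is fixed by the $U(q-1)$ stabilizing $w$, while $z\mapsto R_{m,n}^{q-2}(\langle z,w\rangle)$ enjoys the same two properties (it extends to a harmonic polynomial in $\mathbb{P}_{m,n}(\mathbb{C}^q)$ precisely because $P_{m\wedge n}^{(q-2,|m-n|)}$ satisfies the corresponding Jacobi differential equation). The principal obstacle is the one-dimensionality of the $U(q-1)$-invariant subspace of $\mathcal{H}_{m,n}(\Omega_{2q})$; I would establish this either representation-theoretically via the branching rule $U(q)\downarrow U(q-1)$, or constructively by parametrizing a general $U(q-1)$-invariant element of $\mathbb{P}_{m,n}(\mathbb{C}^q)$ (say for $m\geq n$) in the form $z_q^{m-n}\sum_k a_k |z_q|^{2k}\bigl(\sum_{j<q}|z_j|^2\bigr)^{n-k}$ and showing that $\Delta_{(2q)}=0$ determines $\{a_k\}$ up to a scalar via the classical Jacobi recurrence. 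Once this is in hand, $K_{m,n}(z,w)=c_{m,n}R_{m,n}^{q-2}(\langle z,w\rangle)$ for some constant $c_{m,n}$; setting $z=w$ and using the $U(q)$-invariance of $K_{m,n}(z,z)=\sum_j|Y_{m,n}^j(z)|^2$ together with $\int_{\Omega_{2q}}|Y_{m,n}^j|^2\,d\sigma_q=1$ yields $K_{m,n}(z,z)=d(m,n)/\omega_q$, and the normalization $R_{m,n}^{q-2}(1)=1$ finally gives $c_{m,n}=d(m,n)/\omega_q$, as claimed.
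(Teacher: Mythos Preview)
The paper does not prove this lemma; it is quoted with a reference to Koornwinder~\cite{koorn} and used as a black box thereafter. Your outline is the classical route to the addition formula: identify $\sum_j Y_{m,n}^j(z)\overline{Y_{m,n}^j(w)}$ as the reproducing kernel of $\mathcal{H}_{m,n}(\Omega_{2q})$, use $U(q)$-invariance of $\sigma_q$ and transitivity of the stabilizer $U(q-1)$ on the level sets of $\langle\,\cdot\,,w\rangle$ to force zonality, invoke one-dimensionality of the zonal subspace to get proportionality with $R_{m,n}^{q-2}(\langle\,\cdot\,,w\rangle)$, and fix the constant by integrating the diagonal. This is essentially the argument one finds in Koornwinder's reports and in the subsequent literature, so there is nothing to compare against in the present paper; your sketch is sound, with the only nontrivial step being the one you flag yourself, namely that the $U(q-1)$-fixed vectors in $\mathcal{H}_{m,n}(\Omega_{2q})$ form a line. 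Either of your two proposed justifications (branching or the direct Laplacian computation on the monomial ansatz) works.
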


Next let $w$ be in $\Omega_{2q}$ and consider $\gamma$ in $B_2$. The subsphere $\Omega_w^\gamma$ of $\Omega_{2q}$ with radius $(1-|\gamma|^2)^{1/2}$ and pole  $w$ is the intersection of $\Omega_{2q}$ with the hyperplane $\langle z, w \rangle = \gamma$, $z \in \Omega_{2q}$. In other words,
\begin{eqnarray}
\label{def_subsphere}
\Omega_w^\gamma:=\left\{z \in \Omega_{2q}\,:\, \langle z, w \rangle = \gamma\right\}.
\end{eqnarray}

Since the  above definition  makes sense only when $q>1$, this will be a standing hypothesis when these subspheres appear in context.

We remark that $\Omega_w^0$ is a copy of $\Omega_{2q-2}$ embedded in $\Omega_{2q}$ that is orthogonal to $w$ while $\Omega_w^1=\{w\}$. The restriction $\sigma_w^\gamma$ of $\sigma_q$ to $\Omega_w^\gamma$ makes this set a measurable space and we have the following relation between surface elements:
\begin{eqnarray}d\sigma_w^\gamma = (1 - |\gamma|^2)^{(2q-3)/2}d\sigma_{q-1},\quad \gamma \in B_2. \label{eqno(2.4)}\end{eqnarray}

The following technical lemma shows that elements of $\Omega_{2q}$ may be expressed in terms of the pole $w$ and of an element of $\Omega_w^\gamma$.

\begin{lemma}Let $q>1$ be an integer and $w$ be an element of $\Omega_{2q}$. If $z \in \Omega_{2q}$ and $\gamma \in B_2$ such that $|\gamma| \neq 1$, then there exists $te^{i\theta} \in B_2$ and satisfying
\begin{eqnarray} z=\left(t\,e^{i\theta} - \gamma\frac{\sqrt{1-t^2}}{\sqrt{1-|\gamma|^2}}\right)w + \frac{\sqrt{1-t^2}}{\sqrt{1-|\gamma|^2}}z',\quad z' \in \Omega_w^\gamma.\label{eno(2.5)}\end{eqnarray}\end{lemma}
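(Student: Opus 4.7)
The strategy is to use the orthogonal decomposition of $z$ along the direction $w$ and then rescale the component orthogonal to $w$ so that it lies on the required subsphere. The entire proof is constructive: I will simply exhibit $te^{i\theta}$ and $z'$ explicitly in terms of $z$, $w$, and $\gamma$, and verify the formula.

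First I would set $\zeta:=\langle z,w\rangle$ and write the orthogonal decomposition
\[
z = \zeta\, w + u,\qquad u := z - \zeta w,\qquad \langle u,w\rangle = 0,
\]
valid because $\langle w,w\rangle = 1$. From $\|z\|^{2}=1$ and $u\perp w$ one gets $|\zeta|^{2}+\|u\|^{2}=1$, so in particular $|\zeta|\le 1$. I would then define $t:=|\zeta|\in[0,1]$ and choose $\theta$ so that $\zeta = t\,e^{i\theta}$ (with $\theta$ arbitrary if $\zeta=0$). This already gives $te^{i\theta}\in B_{2}$ and the identity $\|u\|=\sqrt{1-t^{2}}$.

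Next I would define the point on the subsphere by rescaling $u$. Assuming first that $t<1$ (so $u\ne 0$), set
\[
z' := \gamma\, w \;+\; \frac{\sqrt{1-|\gamma|^{2}}}{\sqrt{1-t^{2}}}\, u .
\]
Since $\langle u,w\rangle=0$, one checks directly that $\langle z',w\rangle=\gamma$ and $\|z'\|^{2}=|\gamma|^{2}+(1-|\gamma|^{2})=1$, so $z'\in\Omega_{w}^{\gamma}$. It remains to substitute this $z'$ into the right-hand side of \eqref{eno(2.5)}: the coefficient of $u$ becomes $\frac{\sqrt{1-t^{2}}}{\sqrt{1-|\gamma|^{2}}}\cdot\frac{\sqrt{1-|\gamma|^{2}}}{\sqrt{1-t^{2}}}=1$, while the coefficient of $w$ collapses to $t\,e^{i\theta}-\gamma\beta+\gamma\beta=t\,e^{i\theta}=\zeta$, where $\beta=\sqrt{1-t^{2}}/\sqrt{1-|\gamma|^{2}}$. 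Hence the right-hand side equals $\zeta w + u = z$, as required.

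Finally, the degenerate case $t=1$ must be handled separately, but this is not really an obstacle: then $u=0$, so $z=\zeta w$, and the right-hand side of \eqref{eno(2.5)} reduces to $\zeta w$ for \emph{any} choice of $z'\in\Omega_{w}^{\gamma}$; such $z'$ exists because $\Omega_{w}^{\gamma}$ is non-empty whenever $q>1$ (it is a translated copy of the real sphere of dimension $2q-3$). The hypothesis $|\gamma|\ne 1$ is used only to ensure the denominator $\sqrt{1-|\gamma|^{2}}$ in the formula is nonzero. Thus the only step needing care is the bookkeeping in the boundary cases $t=0$ and $t=1$; the core algebraic verification is a one-line computation.
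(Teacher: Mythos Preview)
Your proof is correct and follows essentially the same approach as the paper: orthogonally decompose $z$ along $w$, rescale the orthogonal part so that it lands on $\Omega_w^\gamma$, and verify the algebra. The only cosmetic difference is that the paper first normalizes the orthogonal component to a unit vector $w'\in\Omega_w^0$ and writes $u=\sqrt{1-t^2}\,e^{i\theta'}w'$ before defining $z'=\gamma w+\sqrt{1-|\gamma|^2}\,e^{i\theta'}w'$, whereas you work directly with $u$; your treatment of the degenerate case $t=1$ is likewise the same as the paper's handling of $z=\pm w$.
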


\begin{proof} Let $z \in \Omega_{2q}$ and $\gamma \in B_2$. If $z=\pm w$ then expression \eqref{eno(2.5)} holds for
$te^{i\theta}=\pm 1$ and $z'$ an arbitrary element in $\Omega_w^\gamma$, $|\gamma| \neq 1$.
For the case $z \in \Omega_w^\gamma$ or $z \in \Omega_{-w}^\gamma$ when $|\gamma| \neq 1$, it is enough to consider
$z'=z$ and $te^{i\theta}=\gamma$ or $te^{i\theta}=-\gamma$. Now, assume that $z$ is distinct from $\pm w$ and is not  in
$\Omega_w^\gamma$, $|\gamma| \neq 1$. Spherical projection of $z$ then gives rise to a vector $w' \in \Omega_w^0$ such that
$z=\langle z, w\rangle w + \langle z, w'\rangle w'$. Thus there exist $t,t' \in [0,1]$ and $\theta, \theta' \in [0, 2\pi)$
such that $\langle z, w\rangle=t e^{i\theta}$ and $ \langle z, w'\rangle= t' e^{i\theta'}$. Since $\langle w', w \rangle = 0$ and $\langle z,z \rangle =1$, it follows that
\begin{eqnarray} \label{eqn(26)}z=t e^{i\theta} w + \sqrt{1-t^2}\,e^{i\theta'} w'.\end{eqnarray}
Now consider the vector \begin{eqnarray}\label{eqn(2.7)}z'= \gamma w + \sqrt{1-|\gamma|^2}\, e^{i\theta'} w'\end{eqnarray} in $\Omega_w^\gamma$.
Elimination of $e^{i\theta'}w'$ in \eqref{eqn(26)} and \eqref{eqn(2.7)} leads to the representation \eqref{eno(2.5)},
which concludes the proof. \hfill\rule{2.5mm}{2.5mm} \end{proof}

When $\gamma=0$, expression \eqref{eno(2.5)} leads to the well-known relationship between the measures $d\sigma_q$ and $d\sigma_{q-1}$
\begin{eqnarray}d\sigma_q(z)= t(1-t^2)^{q-2}dt\, d\theta\,d\sigma_{q-1}(z'),\quad te^{i\theta} \in B_2,\label{eqn(2.8)}\end{eqnarray}
where $z=t\,e^{i\theta}w + \sqrt{1-t^2}\,z'$, $z' \in \Omega_{2q-2}$.

Performing a change of variables to polar coordinates we find that (\cite[p.13]{rudin})
\begin{eqnarray}\int_{B_{q+1}}f(\eta)d\eta= \int_0^1\left[\int_{\Omega_{2q}}f(rz)d\sigma_q(z)\right]r^{2q-1}dr, \quad f \in L^1(B_{q+1},d\eta). \label{eqno2.9}\end{eqnarray}

The following theorem is the essential ingredient for our main result. It establishes an identity between the integral over the sphere $\Omega_{2q}$ and the integral over the cylinder $[0,2\pi] \times B_q$.

\begin{theorem}\label{teo(2.1)}
Let $w$ be an element of $\Omega_{2q}$ for  $q>1$. If $f$ is a function of $L^1(\Omega_{2q},d\sigma_q)$, then
\begin{eqnarray*}
\int_{\Omega_{2q}}f(z)d\sigma_q(z) = \int_0^{2\pi}\int_{B_q}f\left(\eta +e^{i\theta}\sqrt{1-\|\eta\|^2}\, w\right)d\eta d\theta.
\label{eqno2.10}
\end{eqnarray*}\end{theorem}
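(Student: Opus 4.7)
The plan is to reduce the identity to the slicing formula (\ref{eqn(2.8)}) for $d\sigma_q$ relative to the pole $w$, composed with the polar coordinate formula (\ref{eqno2.9}) applied to $B_q \subset \mathbb{C}^{q-1}$, linked by a single trigonometric change of variable. Geometrically, the decomposition $z = \alpha w + \eta$ with $\alpha \in \mathbb{C}$, $\eta$ in the orthogonal complement of $\mathbb{C} w$ (identified with $\mathbb{C}^{q-1}$), and $|\alpha|^2 + \|\eta\|^2 = 1$ already exhibits the integrand appearing on the right-hand side: writing $\alpha = e^{i\theta}\sqrt{1-\|\eta\|^2}$ recovers precisely $\eta + e^{i\theta}\sqrt{1-\|\eta\|^2}\, w$, and the map $\eta \mapsto \eta + e^{i\theta}\sqrt{1-\|\eta\|^2}\, w$ sends $B_q$ into $\Omega_{2q}$.

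First, I would apply (\ref{eqn(2.8)}) directly to the left-hand side, representing each (generic) $z \in \Omega_{2q}$ as $z = te^{i\theta}w + \sqrt{1-t^2}\, z'$ with $t \in [0,1]$, $\theta \in [0,2\pi)$ and $z' \in \Omega_{2q-2}$. This converts the left-hand side into a threefold iterated integral of $f(te^{i\theta}w + \sqrt{1-t^2}\, z')$ against the weight $t(1-t^2)^{q-2}\, dt\, d\theta\, d\sigma_{q-1}(z')$.

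Next I would process the right-hand side by polar coordinates: applying (\ref{eqno2.9}) with $q$ replaced by $q-1$ and writing $\eta = r z'$ with $r \in [0,1]$ and $z' \in \Omega_{2q-2}$, I obtain a threefold iterated integral of $f(r z' + e^{i\theta}\sqrt{1-r^2}\, w)$ against $r^{2q-3}\, dr\, d\theta\, d\sigma_{q-1}(z')$. The substitution $r = \sqrt{1-t^2}$ then yields $r^{2q-3}\, dr = -t(1-t^2)^{q-2}\, dt$, and the inversion of the endpoints absorbs the sign; simultaneously the integrand transforms into $f(\sqrt{1-t^2}\, z' + te^{i\theta}w)$. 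Term-by-term comparison of integrand and weight shows the two iterated integrals coincide.

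The only step requiring care is the geometric identification: one must check that the Lebesgue measure $d\eta$ inherited from the orthogonal complement of $\mathbb{C} w$ (identified with $\mathbb{C}^{q-1}$) is precisely the measure used in (\ref{eqno2.9}), and that the exceptional set $z = \pm w$, where the parametrization underlying (\ref{eqn(2.8)}) degenerates, is $\sigma_q$-null and hence discardable by Fubini. Both points are routine once an orthonormal complement of $w$ is fixed, making the substitution above the substantive content of the proof.
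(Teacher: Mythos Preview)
Your proposal is correct and follows essentially the same route as the paper: apply the slicing formula \eqref{eqn(2.8)} at the pole $w$, use the polar-coordinate identity \eqref{eqno2.9} on $B_q$, and link the two via the substitution $r=\sqrt{1-t^2}$. The only cosmetic difference is that the paper works forward from the left-hand side (slice, then substitute, then apply \eqref{eqno2.9}), whereas you expand both sides separately and match; the content is identical.
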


\begin{proof} We consider a $\sigma_q$-integrable function $f$ over $\Omega_{2q}$. Then expression \eqref{eqn(2.8)} implies the equality
$$\int_{\Omega_{2q}}\hspace*{-3mm}f(z)d\sigma_q(z)
 =   \int_0^{2\pi}\int_0^1\left[\int_{\Omega_{2q-2}}\widetilde{f}(z',t,\theta)\,d\sigma_{q-1}(z')\right]t(1-t^2)^{q-2}dt\, d\theta,$$
in which $\widetilde{f}(z',t,\theta)=f\left(te^{i\theta} w+\sqrt{1-t^2} z'\right)$.
Performing the change of variable $r=\sqrt{1-t^2}$ leads to
$$\int_{\Omega_{2q}}\hspace*{-2mm}f(z)\,d\sigma_q(z) =  \int_0^{2\pi}\int_0^1\left[\int_{\Omega_{2q-2}}\hspace*{-4mm}f\left(e^{i\theta}\sqrt{1-r^2}\, w+r z'\right)\,d\sigma_{q-1}(z')\right]r^{2q-3} dr d\theta.$$
Identity \eqref{eqno2.9} applied to the inner integral of the previous equation  leads to \eqref{eqno2.10},
concluding the proof.\hfill\rule{2.5mm}{2.5mm}\end{proof}

%\begin{theorem}
%\label{teo(2.2)}
%Let $w$ be an element in $\Omega_{2q}$ for  $q>1$. If $K$ is a function in $L^1(B_2, dz)$, then
%\begin{eqnarray}
%\int_{\Omega_{2q}}K(\langle z, w\rangle)d\sigma_q(z) = \int_0^{2\pi}\int_{B_q}K\left(\eta +e^{i\theta}\sqrt{1-\|\eta\|^2}\, w\right)d\eta d\theta.
%\label{eqno2.10}
%\end{eqnarray}\end{theorem}

%\begin{proof} We consider a $dz$-integrable function $K$ over $B_2$. Then expression \eqref{eqn(2.8)} implies the equality
%$$\int_{\Omega_{2q}}\hspace*{-3mm}K(\langle z, w\rangle)d\sigma_q(z)
% =   \int_0^{2\pi}\int_0^1\left[\int_{\Omega_{2q-2}}K(te^{i\theta})\,d\sigma_{q-1}(z')\right]t(1-t^2)^{q-2}dt\, d\theta.$$
%Performing the change of variable $r=\sqrt{1-t^2}$ leads to
%$$\int_{\Omega_{2q}}\hspace*{-2mm}K(\langle z, w\rangle)\,d\sigma_q(z) =  \int_0^{2\pi}\int_0^1\left[\int_{\Omega_{2q-2}}\hspace*{-4mm}K\left(e^{i\theta}\sqrt{1-r^2}\right)\,d\sigma_{q-1}(z')\right]r^{2q-3} dr d\theta.$$
%Identity \eqref{eqno2.9} applied to the inner integral of the previous equation  leads to \eqref{eqno2.10},
%concluding the proof.\end{proof}

In particular, when $w$ is the vector $\varepsilon_q=(0, \ldots, 1) \in \Omega_{2q}$ for $q>1$, we see that \eqref{eqno2.10} assumes the form \begin{eqnarray}\int_{\Omega_{2q}}f(z)d\sigma_q(z) = \int_0^{2\pi}\int_{B_q}f\left(\eta,z_qe^{i\theta}\right)d\eta d\theta, \quad \eta=(z_1, \ldots, z_{q-1}).\label{eqno(2.11)}\end{eqnarray}
This equality thus reduces, in this special case, to the identity of Rudin (\cite[Proposition 1.4.7]{rudin}).

The integral identity below is both the key motivation and, in a sense, the prototype of the general results to be derived in this paper.
Suppose the function under the integral sign in Theorem \ref{teo(2.1)}, which we shall denote by $K$, is {\em bizonal} over $\mathbb{C}^q$ -- that is, its argument depends only on the inner product of $\mathbb{C}^q$. Let  $w \in \Omega_{2q}$. It then follows that
\begin{eqnarray*}\label{eqno(10)}\int_{\Omega_{2q}}K\left(\langle z, w \rangle\right)\,d\sigma_q(z) = \int_0^{2\pi}\int_{B_q}K\left(\sqrt{1-\|\eta\|^2}\,e^{i\theta}\right)d\eta d\theta, \, K \in L^1(B_2, dz).\end{eqnarray*}
We note that this identity is invariant with respect to the pole $w$, a property which shall be explored in Proposition \ref{prop(4.2)} below.

Recall that the measure which connects disk polynomials with
the spherical harmonics present in the Funk-Hecke formula is defined by
\begin{eqnarray}d\nu_q(z):= \frac{q-1}{\pi}(1 - x^2-y^2)^{q-2}dx\,dy, \quad z = x + iy \in B_2,\label{eqno(2.12)}\end{eqnarray}
where $q>1$. Thus the relationship between $d\nu_q$ and  ordinary Lebesgue measure $dz$ is
\begin{eqnarray}d\nu_q(z)= \frac{q-1}{\pi}(1 - |z|^2)^{q-2}dz, \quad z \in B_2.\label{eqno(2.13)}\end{eqnarray}

We now state without proof some standard facts necessary for our main result, to be presented in the next section.
We start with a lemma stating a well-known property about density (\cite{Boyd}).

\begin{lemma} \label{lemma(25)}Let $q>1$. The set $\left\{R_{m,n}^{q-2}\,:\, m, n =0,1, \ldots\right\}$ span the space polynomials on $B_2$ that are restrictions of elements of $\mathbb{P}(\mathbb{C})$. This set is a complete orthogonal system of $L^2(B_2, d\nu_q)$.\end{lemma}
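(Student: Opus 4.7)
The plan is to separate the claim into two natural halves: the algebraic spanning statement and the $L^2$-analytic statement (orthogonality plus density), and reduce each to well-known facts about Jacobi polynomials.

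First I would analyze the bi-degree of $R_{m,n}^{q-2}$. Writing $z=re^{i\theta}$ and assuming $m\ge n$, the definition gives $R_{m,n}^{q-2}(z)=z^{m-n}P_n^{(q-2,m-n)}(2|z|^2-1)$, which is a polynomial in $z$ of degree $m$ and in $\overline z$ of degree $n$; the case $n\ge m$ is symmetric by property {\bf 3} of Lemma~\ref{lem(2.1)}. Because the Jacobi polynomial $P_n^{(q-2,m-n)}(2|z|^2-1)$ has non-zero leading coefficient in the variable $|z|^2=z\overline z$, the highest bi-degree term of $R_{m,n}^{q-2}$ is a non-zero scalar multiple of $z^m\overline z^{\,n}$ modulo polynomials of strictly lower bi-degree. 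An induction on $m+n$ then shows that every monomial $z^a\overline z^{\,b}$, and hence every element of $\mathbb{P}(\mathbb{C})|_{B_2}$, is a (finite) linear combination of the $R_{m,n}^{q-2}$'s, proving the spanning assertion.

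For orthogonality I would compute the inner product on $L^2(B_2,d\nu_q)$ directly in polar coordinates. Using property {\bf 3} to turn the conjugate into $R_{n',m'}^{q-2}$ and substituting $z=re^{i\theta}$ with $d\nu_q=\frac{q-1}{\pi}(1-r^2)^{q-2}r\,dr\,d\theta$, the $\theta$-integral forces the Kronecker condition $m-n=m'-n'$. Denoting this common value by $d$ (WLOG $d\ge 0$, so $m\wedge n=n$, $m'\wedge n'=n'$), a change of variable $u=2r^2-1$ converts the remaining radial integral into
\begin{equation*}
\frac{q-1}{2^{d+q-1}}\int_{-1}^{1}(1-u)^{q-2}(1+u)^{d}\,P_n^{(q-2,d)}(u)\,P_{n'}^{(q-2,d)}(u)\,du,
\end{equation*}
which vanishes for $n\ne n'$ by the classical orthogonality of Jacobi polynomials against the weight $(1-u)^{q-2}(1+u)^{d}$.

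Finally, completeness in $L^2(B_2,d\nu_q)$ follows from the first half: since the $R_{m,n}^{q-2}$ span the algebra of polynomials in $z,\overline z$ restricted to the compact set $B_2$, and this algebra separates points and contains the constants, the Stone--Weierstrass theorem gives uniform density in $C(B_2)$, and hence $L^2$-density with respect to the finite measure $d\nu_q$; combined with orthogonality this yields completeness.

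The only potentially delicate step is the bi-degree analysis of $R_{m,n}^{q-2}$, because one must verify that the leading coefficient of the appropriately normalized Jacobi polynomial is non-zero (and hence that the induction on bi-degree actually closes). Everything else is a routine reduction to the Jacobi orthogonality relation and a standard density argument.
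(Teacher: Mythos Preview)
Your argument is correct in all three parts. The bi-degree analysis is fine: with the paper's normalization $P_k^{(q-2,|m-n|)}(1)=1$ the leading coefficient is a non-zero rescaling of the standard Jacobi leading coefficient $2^{-k}\binom{2k+q-2+|m-n|}{k}$, so the induction on $m+n$ closes. The orthogonality computation is accurate (your constant $\frac{q-1}{2^{d+q-1}}$ checks out), and the Stone--Weierstrass/density step is the standard one.

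As for comparison with the paper: there is nothing to compare. The paper states Lemma~\ref{lemma(25)} as a known fact, cites \cite{Boyd}, and gives no proof. Your write-up is precisely the kind of self-contained verification one would expect in the cited source, reducing everything to the classical Jacobi orthogonality relation on $[-1,1]$ with weight $(1-u)^{q-2}(1+u)^{d}$.
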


The Stone-Weierstrass approximation theorem is the subject of the next result (\cite{folland}).

\begin{lemma} \label{lemma(2.6)}
Every continuous complex function on compact subsets of $\mathbb{C}$ may be uniformly approximated by a polynomial
in the variables $z$ and $\overline{z}$.\end{lemma}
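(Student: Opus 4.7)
The plan is to deduce the statement as a direct application of the complex version of the Stone--Weierstrass theorem. Let $K\subset\mathbb{C}$ be a compact subset and let $\mathcal{A}$ denote the set of restrictions to $K$ of polynomials in $z$ and $\overline{z}$ with complex coefficients, viewed as a subset of the Banach algebra $C(K,\mathbb{C})$ endowed with the uniform norm. The goal is to show that $\mathcal{A}$ is uniformly dense in $C(K,\mathbb{C})$, which is precisely the content of the lemma.

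First I would verify that $\mathcal{A}$ is a complex subalgebra of $C(K,\mathbb{C})$: sums and products of polynomials in $z$ and $\overline{z}$ are again polynomials in $z$ and $\overline{z}$, and the constant functions (in particular the constant $1$) belong to $\mathcal{A}$. Second, I would check that $\mathcal{A}$ separates points of $K$: for distinct points $z_1,z_2\in K$, the monomial $p(z)=z$ satisfies $p(z_1)\neq p(z_2)$.

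The crucial third step is closure under complex conjugation. Writing a generic element as
\begin{equation*}
p(z)=\sum_{\alpha,\beta} c_{\alpha,\beta}\,z^{\alpha}\overline{z}^{\,\beta},
\end{equation*}
we have
\begin{equation*}
\overline{p(z)}=\sum_{\alpha,\beta} \overline{c_{\alpha,\beta}}\,\overline{z}^{\,\alpha}z^{\beta},
\end{equation*}
which again lies in $\mathcal{A}$. This is exactly the point where it is essential that $\overline{z}$ be admitted as a free variable: the subalgebra of polynomials in $z$ alone would fail this condition and, by Mergelyan's theorem, would yield only functions holomorphic on the interior of $K$.

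With the three hypotheses (subalgebra containing constants, separation of points, closure under conjugation) verified, the complex Stone--Weierstrass theorem (see \cite{folland}) implies that $\mathcal{A}$ is dense in $C(K,\mathbb{C})$ in the uniform topology, which is the claim. There is no real obstacle here beyond recalling the correct hypotheses of the complex Stone--Weierstrass theorem and recognizing that the conjugation-closure condition is precisely what forces the presence of $\overline{z}$ as an independent variable alongside $z$.
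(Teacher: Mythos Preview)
Your argument is correct: verifying that the algebra of polynomials in $z$ and $\overline{z}$ contains the constants, separates points, and is closed under conjugation is exactly what the complex Stone--Weierstrass theorem requires, and your remark about why $\overline{z}$ must be present is apt. The paper itself does not prove this lemma; it simply states it as a standard fact with a reference to \cite{folland}, so your proposal in fact supplies more detail than the paper does.
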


The proof of the lemma below is standard and will also  be omitted. By $L^1(B_2, d\nu_q)$ we denote the space of  $\nu_q$-integrable functions over $B_2$.

\begin{lemma}
\label{lemma(2.7)}
Let $q>1$ and $(f_n)$ be a sequence of $L^1(B_2, d\nu_q)$. If $(f_n)$ has uniform limit $f$ then $f$ is an element of $L^1(B_2, d\nu_q)$ and
\begin{eqnarray*}
\int_{B_2}f(z) d\nu_q(z) = \lim_{n \to \infty} \int_{B_2} f_n(z) d\nu_q(z).
\label{eqno(2.14)}
\end{eqnarray*}
\end{lemma}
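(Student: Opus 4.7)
The plan is straightforward: the result is essentially the well-known fact that on a finite measure space, uniform convergence implies convergence of integrals. The only content to verify here is that $\nu_q$ is a finite (indeed, a probability) measure on $B_2$, and then the rest is a standard two-line estimate.

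First I would compute $\nu_q(B_2)$. Using polar coordinates on $B_2$ and the definition \eqref{eqno(2.13)},
\begin{equation*}
\nu_q(B_2) = \frac{q-1}{\pi}\int_0^{2\pi}\int_0^1 (1-r^2)^{q-2}\, r\, dr\, d\theta = 2(q-1)\int_0^1 (1-r^2)^{q-2}\, r\, dr = 1,
\end{equation*}
so $\nu_q$ is in fact a probability measure on $B_2$. Next I would establish that $f \in L^1(B_2, d\nu_q)$. Since $f$ is the uniform limit of the measurable functions $f_n$, it is measurable. Choose $N$ large enough that $\|f_N - f\|_\infty \le 1$. Then $|f(z)| \le |f_N(z)| + 1$ for every $z \in B_2$, and
\begin{equation*}
\int_{B_2} |f|\, d\nu_q \le \int_{B_2} |f_N|\, d\nu_q + \nu_q(B_2) = \|f_N\|_{L^1(d\nu_q)} + 1 < \infty,
\end{equation*}
so $f \in L^1(B_2, d\nu_q)$.

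Finally, the convergence of the integrals follows from the elementary bound
\begin{equation*}
\left|\int_{B_2} f_n\, d\nu_q - \int_{B_2} f\, d\nu_q\right| \le \int_{B_2} |f_n - f|\, d\nu_q \le \|f_n - f\|_\infty \, \nu_q(B_2) = \|f_n - f\|_\infty,
\end{equation*}
which tends to $0$ as $n \to \infty$ by the hypothesis of uniform convergence. There is no real obstacle to this proof; the finiteness of $\nu_q$ is what makes uniform convergence sufficient (and in fact much stronger than needed) to justify passing the limit inside the integral.
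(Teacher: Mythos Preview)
Your proof is correct; it is exactly the standard finite-measure argument. The paper itself omits the proof of this lemma, stating only that it ``is standard and will also be omitted,'' so there is nothing further to compare.
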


There is a version of Lemma \ref{lemma(2.7)} for functions in $L^1(\Omega_{2q}, d\sigma_q)$.

We close the section with the final result that we will use. It is Lusin's Theorem adapted to our context (\cite{folland}).

\begin{lemma}
\label{lemma(2.8)}
Let $q>1$ and $f$ be a function of $L^1(B_2, d\nu_q)$. Then there exists a sequence $(f_n)$ formed by complex continuous functions on $B_2$
such that
\begin{eqnarray*}
\label{eqno(2.15)}
|f_n(z)| \leq \sup_{w \in B_2} |f(w)|, \, n=0,1, \ldots, \quad \lim_{n \to \infty}f_n(z)=f(z),\, \mbox{a.e.}.
\end{eqnarray*}
\end{lemma}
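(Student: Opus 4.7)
The plan is to deduce the statement from the classical form of Lusin's theorem applied to the finite Radon measure space $(B_2, d\nu_q)$, and then upgrade the conclusion (which is naturally expressed in terms of a single approximant that agrees with $f$ off a set of small measure) to pointwise convergence almost everywhere by a Borel--Cantelli argument along a summable sequence of tolerances.

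First, since $B_2$ is a compact Hausdorff space and $d\nu_q$ is a finite Borel (hence Radon) measure by \eqref{eqno(2.12)}, every $f \in L^1(B_2, d\nu_q)$ is measurable and trivially vanishes outside a set of finite measure. The classical Lusin theorem stated in Folland then yields, for each $\varepsilon>0$, a continuous function $g \in C(B_2)$ with
\begin{eqnarray*}
\nu_q\bigl(\{z \in B_2 : f(z) \neq g(z)\}\bigr) < \varepsilon \quad \text{and} \quad \sup_{z \in B_2}|g(z)| \leq \sup_{w \in B_2}|f(w)|.
\end{eqnarray*}
I would specialize this to $\varepsilon = 2^{-n}$, producing continuous functions $f_n$ satisfying the required uniform bound and $\nu_q(E_n) < 2^{-n}$, where $E_n := \{z \in B_2 : f(z) \neq f_n(z)\}$. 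Since $\sum_n \nu_q(E_n) < \infty$, the Borel--Cantelli lemma gives $\nu_q(\limsup_n E_n)=0$; for every $z$ outside this set there is an index $N(z)$ such that $f_n(z)=f(z)$ for all $n\geq N(z)$, and in particular $f_n(z)\to f(z)$. This establishes both conclusions of the lemma.

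The only subtlety is the sup bound itself, since the statement compares $|f_n(z)|$ with the pointwise supremum of $|f|$, which for a general $L^1$ function need not be finite. This is harmless: when $\sup_w|f(w)|=+\infty$ the bound is vacuous, while when $\sup_w|f(w)|<\infty$ the usual proof of Lusin (truncate the approximant obtained from a Tietze extension at the level $\sup|f|$) yields the inequality in the required pointwise form. Thus no delicate estimate is needed beyond citing Lusin and Borel--Cantelli; the main point is simply to package the result so that it is directly usable in the approximation arguments that follow.
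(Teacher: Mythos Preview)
Your argument is correct. The paper itself does not prove this lemma: it simply records it as ``Lusin's Theorem adapted to our context'' with a reference to Folland, so there is no in-paper proof to compare against. Your sketch supplies exactly the standard derivation one would expect behind such a citation---apply Lusin on the compact space $B_2$ with the finite Radon measure $\nu_q$ at tolerances $\varepsilon=2^{-n}$, then invoke Borel--Cantelli to pass from ``agreement off a small set'' to a.e.\ convergence---and your remark that the sup bound is either vacuous (when $\sup|f|=\infty$) or delivered directly by the bounded-case clause of Lusin's theorem is the right way to handle that point.
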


\section{The Funk-Hecke formula via a complex cylinder}
\label{sec:2}

The purpose of this section is to formulate a complex version of the Funk-Hecke formula, where the coefficients are obtained by an integral over the cylinder $[0,2\pi]\times B_q$. We begin with some background.

The formula in question refers to a class of integral operators generated by a  $\nu_q$-integrable kernel $K$.
These operators have spherical harmonics as eigenfunctions and the corresponding eigenvalues  admit  integral
representations involving both $K$ and certain polynomials (\cite{oliveira}, \cite{quinto}). The importance of this
formula within the framework of  Harmonic Analysis is well-known. The intimate relationship of the formula with
integral operators on the unit sphere is explored in depth  in references \cite{axler}, \cite{Boyd}, \cite{groemer} and \cite{muller}.

The classic formula of Funk-Hecke dates originally from 1915 (\cite{funk}, \cite{hecke}). It refers to the unit sphere $S^{q-1}$ in the euclidian space $(\mathbb{R}^q, \cdot)$, where $\cdot$ is the usual euclidian inner product. Its most common form is the following proposition (\cite[p. 11]{Dai}).

\begin{proposition}
\label{pro(3.1)}
Let $q >1$ and consider  an element $K$ of $L^1([-1,1],dt) \cap L^1([-1,1],(1-t^2)^{(q-3)/2}dt)$.
If $Y$ is a standard spherical harmonic of degree $n$ on $S^{q-1}$, then for each $y \in S^{q-1}$
the function $x \in S^{q-1} \mapsto K(x\cdot y) Y(x)$ is $\tau_q$-integrable over $S^{q-1}$ and
\begin{eqnarray*}\int_{S^{q-1}} K(x\cdot y)Y(x)d\tau_q(x)= \lambda^{q}_n (K)Y(y),\quad y \in S^{q-1},
\label{eqno3.1}
\end{eqnarray*}
where
\begin{eqnarray*}
\lambda^{q}_n(K)=\frac{2(\sqrt{\pi})^{q-1}}{\Gamma\left(\frac{q-1}{2}\right)}\int_{-1}^1 K(t)P_n^{(q-2)/2}(t)(1-t^2)^{(q-3)/2}dt.
\label{eqno(3.2)}
\end{eqnarray*}
\end{proposition}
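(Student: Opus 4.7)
The plan is to reduce by rotation invariance to the case where $y$ is the north pole, slice $S^{q-1}$ into parallels of the pole, and identify the inner integral over each parallel as a Gegenbauer polynomial using the one-dimensionality of the space of zonal spherical harmonics of a given degree. By the $O(q)$-invariance of $d\tau_q$ together with the fact that $Y\circ U$ is again a spherical harmonic of degree $n$ for any $U\in O(q)$, it suffices to prove the formula at $y=\varepsilon:=(0,\ldots,0,1)$. Each $x\in S^{q-1}$ may be written as $x=t\varepsilon+\sqrt{1-t^2}\,x'$ with $t\in[-1,1]$ and $x'\in S^{q-2}$, so that $x\cdot\varepsilon=t$ and the standard slicing $d\tau_q(x)=(1-t^2)^{(q-3)/2}\,dt\,d\tau_{q-1}(x')$ applies. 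The hypotheses on $K$ and boundedness of $Y$ on $S^{q-1}$ justify Fubini, whence the integral of interest becomes $\int_{-1}^{1}K(t)(1-t^2)^{(q-3)/2}A(t)\,dt$, where $A(t):=\int_{S^{q-2}}Y\bigl(t\varepsilon+\sqrt{1-t^2}\,x'\bigr)\,d\tau_{q-1}(x')$.

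The crux is to identify $A(t)=\omega_{q-1}\,P_n^{(q-2)/2}(t)\,Y(\varepsilon)$, with $\omega_{q-1}=2\pi^{(q-1)/2}/\Gamma((q-1)/2)$ the measure of $S^{q-2}$. To prove this, I would extend $Y$ to a homogeneous harmonic polynomial of degree $n$ on $\mathbb{R}^q$ and consider the function obtained by averaging $Y\circ U$ over the stabilizer $O(q-1)$ of $\varepsilon$. This average is still homogeneous of degree $n$ and harmonic (the Laplacian commuting with rotations), and is $O(q-1)$-invariant by construction; its restriction to $S^{q-1}$ is therefore a zonal spherical harmonic of degree $n$ with pole $\varepsilon$. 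Since the space of such functions is one-dimensional and spanned by $t\mapsto P_n^{(q-2)/2}(t)$, we conclude $A(t)=c\,P_n^{(q-2)/2}(t)$ for some constant $c$. Evaluating at $t=1$, where $x$ collapses to $\varepsilon$ and the inner integrand equals $Y(\varepsilon)$ identically, gives $A(1)=\omega_{q-1}Y(\varepsilon)$; the normalization $P_n^{(q-2)/2}(1)=1$ then forces $c=\omega_{q-1}Y(\varepsilon)$.

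Substituting this identification of $A(t)$ into the sliced integral produces exactly $\lambda_n^q(K)\,Y(\varepsilon)$ with the claimed constant, and undoing the rotation recovers the proposition at arbitrary $y\in S^{q-1}$. The main obstacle is the identification of $A(t)$ with a multiple of $P_n^{(q-2)/2}(t)$; the underlying structural fact---uniqueness up to scalar of the zonal spherical harmonic of degree $n$---is classical but requires an independent invocation. An equivalent route, bypassing this representation-theoretic input, would be to expand $K$ in Gegenbauer polynomials (complete in $L^2([-1,1],(1-t^2)^{(q-3)/2}dt)$), verify the formula polynomial by polynomial via the real addition formula together with the orthogonality of spherical harmonics, and pass to the limit using the $L^1$ hypotheses on $K$.
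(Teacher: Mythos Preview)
The paper does not actually prove this proposition; it is quoted as the classical real Funk--Hecke formula with a reference to \cite[p.~11]{Dai}, serving only as historical background for the complex version developed afterwards. Your argument is correct and is essentially the standard textbook proof (as in \cite{Dai} or \cite{muller}): reduce to the north pole by rotation invariance, slice $S^{q-1}$ into parallels, and identify the parallel average of $Y$ with a multiple of the Gegenbauer polynomial via the one-dimensionality of the space of zonal harmonics of degree $n$, fixing the constant by evaluation at $t=1$. The alternative route you sketch at the end---expand $K$ in the orthogonal polynomials, verify the formula term by term using the addition formula and orthogonality of spherical harmonics, then pass to the limit---is exactly the scheme the paper itself employs for the \emph{complex} analogue in Theorem~\ref{teo(3.3)}, so both of your suggested approaches are in line with the methods of the paper.
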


Here, $\tau_q$ is the Borel measure on $S^{q-1}$, $P_n^{(q-2)/2}$ is the normalized Gegenbauer polynomial of degree $n$ associated to $(q-2)/2$ and $\Gamma$ is the usual gamma function.

Xu Yuan proved  in 2000  another version of the formula on $S^{m-1}$(\cite{xu}). Its formulation depends on a group which is finitely generated by a set of reflexive operators on hyperplanes of $\mathbb{R}^m$. The difference with respect to the present  context is the replacement of spherical harmonics by $h$-spherical harmonics, where $h$ is a suitable weight function. Thus Proposition \ref{pro(3.1)} may be seen as a special case of the theory of Xu where $h$ is the constant function $1$. This theory is extensive and shall not be addressed here, since it is not essential to our purposes.

The complex version of the Funk-Hecke formula was first explored in 1984 in \cite{quinto}. Quinto's proof is based on arguments from group theory and Haar measure. In 2005 a proof using independent arguments was presented in \cite[Theorem 3.5]{oliveira}. The formulation is as follows.

\begin{proposition} \label{prop(3.2)} Let $q>1$ and $w$ be an element of $\Omega_{2q}$. If $K \in L^1(B_2,d\nu_q)$ and $Y \in \mathcal{H}_{m,n}(\Omega_{2q})$ then the function $z \in \Omega_{2q} \mapsto K(\langle z,w\rangle) \overline{Y(z)}$ is $\sigma_q$-integrable and \begin{eqnarray}\int_{\Omega_{2q}} K(\langle z, w \rangle)\overline{Y(z)}d\sigma_q(z)= \lambda_{m,n}^{q-2} (K)\overline{Y(w)},\label{eqno(3.3)}\end{eqnarray}
where
\begin{eqnarray}\lambda_{m,n}^{q-2} (K)= \omega_q\int_{B_2} K(z)\overline{R_{m,n}^{q-2}(z)}d\nu_q(z).\label{eqno(3.4)}\end{eqnarray}\end{proposition}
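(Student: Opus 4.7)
The plan is to reduce by linearity to the case $Y=Y^j_{m,n}$ from the orthonormal basis, to verify the identity in the pivot case when the kernel is itself a disk polynomial $R^{q-2}_{m',n'}$ (where the addition formula does all the work), and then to transport the identity to a general $K\in L^1(B_2,d\nu_q)$ by successive density arguments: from disk polynomials to all polynomial restrictions on $B_2$, then to continuous kernels, then to $L^1$ kernels.

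Before starting, I would dispatch the integrability claim and record the bizonal integration formula that drives every later convergence argument. Applying Theorem~\ref{teo(2.1)} to $z\mapsto |K(\langle z,w\rangle)|$, rewriting $d\eta$ on $B_q$ in polar coordinates via \eqref{eqno2.9}, and substituting $s=\sqrt{1-r^2}$ produces
\begin{equation*}
\int_{\Omega_{2q}} |K(\langle z,w\rangle)|\,d\sigma_q(z) = \omega_q \int_{B_2} |K(z)|\,d\nu_q(z),
\end{equation*}
where the constant falls out from $\omega_{q-1}=\omega_q(q-1)/\pi$, immediate from \eqref{eqno(1.9)}. Since $Y$ is bounded on the compact set $\Omega_{2q}$, this secures $\sigma_q$-integrability of the integrand in \eqref{eqno(3.3)}.

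For the pivot case, I would insert the addition formula \eqref{eq_addition_formula} with $K=R^{q-2}_{m',n'}$ into the left side of \eqref{eqno(3.3)}, pull the finite sum outside the integral, and use $L^2(\Omega_{2q},d\sigma_q)$-orthonormality of $\{Y^k_{m',n'}\}$: the left side collapses to $\omega_q\,\overline{Y^j_{m,n}(w)}/d(m,n)$ when $(m',n')=(m,n)$ and to $0$ otherwise. On the right side, \eqref{eqno(3.4)} combined with the $d\nu_q$-orthogonality of the disk polynomials (Lemma~\ref{lemma(25)}) returns $\omega_q\,\|R^{q-2}_{m,n}\|^2_{L^2(B_2,d\nu_q)}\,\overline{Y^j_{m,n}(w)}$ in the same diagonal case, and $0$ off-diagonal. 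Both sides therefore coincide provided one invokes the classical normalization $\|R^{q-2}_{m,n}\|^2_{L^2(B_2,d\nu_q)}=1/d(m,n)$, a standard $L^2$-norm identity for disk polynomials.

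The rest is approximation. Linearity plus Lemma~\ref{lemma(25)} extends the identity from disk polynomials to all polynomial restrictions on $B_2$. The Stone--Weierstrass Lemma~\ref{lemma(2.6)}, together with the sphere-side analogue of Lemma~\ref{lemma(2.7)} (whose hypothesis is supplied by the bizonal identity above), then transports the identity to continuous kernels via uniform convergence. To reach $L^1(B_2,d\nu_q)$ I would truncate $K$ as $K_M:=K\,\chi_{\{|K|\le M\}}$, apply Lemma~\ref{lemma(2.8)} to obtain uniformly bounded continuous approximants of $K_M$, pass to the limit in both integrals by dominated convergence, and finally let $M\to\infty$ by another dominated-convergence step. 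The main delicacy will sit in this last paragraph: Lemma~\ref{lemma(2.8)} produces convergence only $d\nu_q$-a.e., and I must verify that this pulls back to $\sigma_q$-a.e.\ convergence of $z\mapsto K(\langle z,w\rangle)$ on $\Omega_{2q}$, which itself follows from applying the bizonal identity to the characteristic function of the relevant $d\nu_q$-null set.
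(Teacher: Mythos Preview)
The paper does not actually prove Proposition~\ref{prop(3.2)}; it quotes the result from \cite[Theorem~3.5]{oliveira} and supplies a full proof only for the equivalent reformulation Theorem~\ref{teo(3.3)}, remarking that the two proofs ``follow the same structure''. Against that proxy, your proposal is correct and follows essentially the same scheme: handle the pivot case of disk-polynomial kernels via the addition formula, then pass by density from polynomials to continuous kernels to $L^1$ kernels.

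Two minor differences are worth recording. In the pivot step you evaluate $\lambda^{q-2}_{m,n}(R^{q-2}_{m',n'})$ by invoking the $d\nu_q$-orthogonality of disk polynomials together with the external norm identity $\|R^{q-2}_{m,n}\|^2_{L^2(B_2,d\nu_q)}=1/d(m,n)$; the paper instead converts the eigenvalue integral back to a sphere integral (via Theorem~\ref{teo(2.1)}) and applies the addition formula a second time, so that both sides are computed on the sphere and no outside normalization is needed. In the final $L^1$ step, your truncation $K_M=K\,\chi_{\{|K|\le M\}}$ and your explicit check that $\nu_q$-null sets pull back to $\sigma_q$-null sets are more careful than the paper's direct appeal to Lemma~\ref{lemma(2.8)} plus dominated convergence, since the dominating bound $|K_j|\le\sup|K|$ recorded there is only useful when $K$ happens to be essentially bounded.
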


A distinct but equivalent formulation of this result is derived below. Although the proof of this version of the Funk-Hecke formula follows the same structure as
the proof of Proposition \ref{prop(3.2)}, we stress that it does not reduce to a mere adaptation.

\begin{theorem}
\label{teo(3.3)}
Let $q>1$ and $w$ be an element of $\Omega_{2q}$. If $K \in L^1(B_2,d\nu_q)$ and $Y \in \mathcal{H}_{m,n}(\Omega_{2q})$, then the function $z \in \Omega_{2q} \mapsto K(\langle z,w\rangle) \overline{Y(z)}$ is $\sigma_q$-integrable and \begin{eqnarray}\int_{\Omega_{2q}} K(\langle z, w \rangle)\overline{Y(z)}d\sigma_q(z)= \Lambda_{m,n}^{q-2} (K)\overline{Y(w)}, \label{eqno(3.5)}\end{eqnarray}
where
\begin{eqnarray}\Lambda_{m,n}^{q-2} (K)=\int_0^{2\pi}\int_{B_q} K\left(\sqrt{1-\|\eta\|^2}\,e^{i\theta}\right)\overline{R_{m,n}^{q-2}\left(\sqrt{1-\|\eta\|^2}\,e^{i\theta}\right)}\,d\eta\, d\theta.\label{eqno(3.6)}\end{eqnarray}\end{theorem}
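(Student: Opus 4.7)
The plan is to apply Theorem \ref{teo(2.1)} to a carefully chosen bizonal integrand on $\Omega_{2q}$ and then invoke the Funk-Hecke formula of Proposition \ref{prop(3.2)} to identify the scalar $\Lambda_{m,n}^{q-2}(K)$ with $\lambda_{m,n}^{q-2}(K)$; identity \eqref{eqno(3.5)} then follows immediately from \eqref{eqno(3.3)}. Integrability of $z \mapsto K(\langle z,w\rangle)\overline{Y(z)}$ over $\Omega_{2q}$ is already part of Proposition \ref{prop(3.2)}; the same proposition guarantees integrability of $K(\langle \cdot, w\rangle)$ on $\Omega_{2q}$ (take $Y\equiv 1 \in \mathcal{H}_{0,0}(\Omega_{2q})$), which together with property 2 of Lemma \ref{lem(2.1)} yields integrability of the bizonal function introduced below.

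The central computation goes as follows. Consider the bizonal function $\Phi(z) := K(\langle z,w\rangle)\overline{R_{m,n}^{q-2}(\langle z,w\rangle)}$ on $\Omega_{2q}$. Apply Theorem \ref{teo(2.1)} with pole $w$, noting that for $z = \eta + e^{i\theta}\sqrt{1-\|\eta\|^2}\,w$ with $\eta$ orthogonal to $w$ one has $\langle z,w\rangle = e^{i\theta}\sqrt{1-\|\eta\|^2}$; this gives
\begin{equation*}
\int_{\Omega_{2q}}\Phi(z)\,d\sigma_q(z) \;=\; \Lambda_{m,n}^{q-2}(K).
\end{equation*}
On the other hand, the Addition Formula \eqref{eq_addition_formula} expresses $\overline{R_{m,n}^{q-2}(\langle z,w\rangle)}$ as $\frac{\omega_q}{d(m,n)}\sum_{j=1}^{d(m,n)}\overline{Y_{m,n}^j(z)}\,Y_{m,n}^j(w)$. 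Interchanging the finite sum with the integral and applying Proposition \ref{prop(3.2)} to each $Y_{m,n}^j$ rewrites the same integral as $\lambda_{m,n}^{q-2}(K)\cdot \frac{\omega_q}{d(m,n)}\sum_j|Y_{m,n}^j(w)|^2$. Using the Addition Formula once more at $z = w$ (with $R_{m,n}^{q-2}(1)=1$) collapses the sum, so this integral equals $\lambda_{m,n}^{q-2}(K)$ as well. Equating the two evaluations yields $\Lambda_{m,n}^{q-2}(K) = \lambda_{m,n}^{q-2}(K)$, completing the proof.

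The only point that requires real attention is the correct geometric setup in the application of Theorem \ref{teo(2.1)}: one must interpret $\eta \in B_q$ as a vector in the orthogonal complement of $w$ in $\mathbb{C}^q$ so that $\langle \eta, w\rangle = 0$ and the simplification $\langle z,w\rangle = e^{i\theta}\sqrt{1-\|\eta\|^2}$ is available. Beyond this, the argument is bookkeeping and no genuinely new analytic obstacle arises. As a sanity check (and an alternative route to the same conclusion avoiding Proposition \ref{prop(3.2)} for the scalar identification), the equality $\Lambda_{m,n}^{q-2}(K) = \lambda_{m,n}^{q-2}(K)$ may be verified directly from the definitions by applying the polar coordinates formula \eqref{eqno2.9} to $B_q \subset \mathbb{C}^{q-1}$, substituting $s = \sqrt{1-r^2}$ so that $r^{2q-3}dr$ becomes $(1-s^2)^{q-2}s\,ds$, recognising $s\,ds\,d\theta$ as planar Lebesgue measure on $B_2$, and simplifying via \eqref{eqno(2.13)} and the identity $\omega_{q-1}\,\pi/(q-1) = \omega_q$ that follows from \eqref{eqno(1.9)}.
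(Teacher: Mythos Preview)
Your argument is correct, but it is a genuinely different proof from the one in the paper. The paper does \emph{not} invoke Proposition \ref{prop(3.2)} at all; instead it gives an independent density argument that parallels the classical proof of Funk--Hecke: first verify \eqref{eqno(3.5)} when $K=R_{k,l}^{q-2}$ by expanding both sides via the addition formula and using orthonormality of the $Y_{m,n}^j$, then extend by linearity and Lemma \ref{lemma(25)} to polynomials, by Stone--Weierstrass (Lemma \ref{lemma(2.6)}) and Lemma \ref{lemma(2.7)} to continuous $K$, and finally by Lusin (Lemma \ref{lemma(2.8)}) and dominated convergence to $K\in L^1(B_2,d\nu_q)$. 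By contrast, you treat Proposition \ref{prop(3.2)} as a black box and reduce everything to the scalar identity $\Lambda_{m,n}^{q-2}(K)=\lambda_{m,n}^{q-2}(K)$, which you obtain in one stroke by applying Theorem \ref{teo(2.1)} to the bizonal integrand $K(\langle z,w\rangle)\overline{R_{m,n}^{q-2}(\langle z,w\rangle)}$ and collapsing the addition formula at $z=w$.

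What this buys: your route is considerably shorter and makes transparent exactly where the new ingredient (Theorem \ref{teo(2.1)}) enters, namely in rewriting a spherical integral of a bizonal function as the cylinder integral defining $\Lambda_{m,n}^{q-2}$. The paper's route is more self-contained (it does not presuppose the complex Funk--Hecke formula) and, as the authors remark after the theorem, yields the equality $\Lambda_{m,n}^{q-2}=\lambda_{m,n}^{q-2}$ as a nontrivial \emph{consequence} rather than as the pivot of the argument. Your alternative verification of $\Lambda_{m,n}^{q-2}=\lambda_{m,n}^{q-2}$ by polar coordinates on $B_q$ and the substitution $s=\sqrt{1-r^2}$ is also correct and is in fact the most elementary path; it recovers directly the equality the authors describe as ``not trivial'' after the theorem. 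Your care about interpreting $\eta\in B_q$ as lying in $w^\perp$ is exactly right and matches how Theorem \ref{teo(2.1)} is proved via \eqref{eqn(2.8)}.
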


\begin{proof} We begin the proof by analyzing the case where $K$ is the polynomial $R^{q-2}_{k,l}$, in which $k$ and $l$ are nonnegative integers. Then the map $z \in \Omega_{2q} \mapsto R^{q-2}_{k,l}(\langle z,w \rangle)\overline{Y(z)}$ is $\sigma_q$-integrable. We write $Y$ in the form
 $$Y=\sum_{j=1}^{d(m,n)}a_jY_{m,n}^j,\quad a_j \in \mathbb{C} $$ and use the addition formula \eqref{eq_addition_formula},
obtaining
  $$\int_{\Omega_{2q}}R^{q-2}_{k,l}(\langle
 z,w\rangle)\overline{Y(z)}d\sigma_q(z) =
 \frac{\omega_q}{d(k,l)}{\delta}_{km}{\delta}_{ln}\sum_{i=1}^{d(k,l)}
 \left(\sum_{j=1}^{d(m,n)}\overline{a}_j
 {\delta}_{ij}\right)\overline{Y_{k,l}^i(w)}.$$ Moreover, by Theorem \ref{teo(2.1)} and again using \eqref{eq_addition_formula},
 \begin{eqnarray*}\Lambda_{m,n}^{q-2}(K)
 & = & \int_{\Omega_{2q}}R_{k,l}^{q-2}(\langle z, w
 \rangle)\overline{R_{m,n}^{q-2}(\langle z, w \rangle)}d\sigma_q(z)\\
 & = & \frac{\omega_q\delta_{km}}{d(k,l)}\frac{\omega_q\delta_{ln}}{d(m,n)}
 \sum_{i=1}^{d(k,l)}\left(\sum_{j=1}^{d(m,n)}
 \delta_{ij}Y_{m,n}^j(w)\right)\overline{Y_{k,l}^i(w)}.\end{eqnarray*}
 Comparing the two expressions above, it is clear that \eqref{eqno(3.5)} holds when $ (k, l) \neq (m, n) $. When $ (k, l) =
  (m, n) $, direct computation by means of \eqref{eq_addition_formula} again shows that
 \begin{eqnarray*}\int_{\Omega_{2q}}R^{q-2}_{k,l}(\langle
 z,w\rangle)\overline{Y(z)}d\sigma_q(z)=\frac{\omega_q}{d(m,n)}\overline{Y(w)}=\Lambda_{m,n}^{q-2}(K)\overline{Y(w)}. \label{eqno(3.8)}\end{eqnarray*}
For the proof of the general case, we  begin by recalling the density result described in Lemma \ref{lemma(25)}. Since the integral operators in \eqref{eqno(3.4)} and \eqref{eqno(3.5)} are bounded, the formula of theorem holds when $K$ is an element in $\mathbb{P}(\mathbb{C})$ restricted to $B_2$. Suppose now that $K$ is a continuous function in $B_2$.
We then use Lemma \ref{lemma(2.6)} to select a sequence $(p_j)$ in $\mathbb{P}(\mathbb{C})$, restricted to $B_2$ such that $ p_j \to K$ uniformly as $j \to \infty$. Continuity of the operators involved and Lemma \ref{lemma(2.7)} imply that
\begin{eqnarray*}\int_{\Omega_{2q}}K(\langle z, w \rangle)\overline{Y(z)}d\sigma_q(z)=\lim_{j \to \infty}\int_{\Omega_{2q}}p_j(\langle z, w \rangle)\overline{Y(z)}d\sigma_q(z)=\Lambda_{m,n}^{q} (K)\overline{Y(w)}.\end{eqnarray*}
Finally, suppose  $K$ is $\nu_q$-integrable over $B_2$. Select a sequence $(K_j)$ of continuous complex functions as in Lemma \ref{lemma(2.8)} such that
\begin{eqnarray*}\lim_{j \to \infty}K_j(z)=K(z), \quad |K_j(z)| \leq \sup_{w \in \Omega_{2q}}|K(w)|,\quad j=0,1,\ldots,\, \mbox{\rm a.e.}.\label{eqno(3.9)}\end{eqnarray*}
Then for each $w$ in $\Omega_{2q}$,
 \begin{eqnarray*}\lim_{j \to \infty} K_j(\langle z, w\rangle) =K(\langle z,w\rangle), \, z \in \Omega_{2q},\quad \mbox{\rm a.e.}.\label{eqno(3.10)}\end{eqnarray*}
Hence, by Lebesgue's dominated convergence theorem,  identity \eqref{eqno(3.5)} is valid for every function
$K \in L^1(B_2, d\nu_q)$. This completes the proof.\hfill\rule{2.5mm}{2.5mm}\end{proof}

Note that the eigenvalues that appear in \eqref{eqno(3.4)} and \eqref{eqno(3.6)} are the same, and therefore
the different integral expressions obtained are equivalent. However, the equality of these two expressions is not trivial
and seems to be unknown in the literature.

Since we are dealing with two complex variables and complex functions, the above formula has several interesting
variants which we present below. These equalities use the fact that the operation of complex conjugation establishes  a bijection from $\mathcal{H}_{m,n}(\Omega_{2q})$  to $\mathcal{H}_{n,m}(\Omega_{2q})$.

\begin{corollary} Under the hypotheses of the above theorem, the following identities hold:
$$\int_{\Omega_{2q}} K(\langle z, w \rangle)Y(z)d\sigma_q(z)= \Lambda_{n,m}^{q-2} (K)Y(w), $$
$$\int_{\Omega_{2q}} K(\langle w, z \rangle)Y(z)d\sigma_q(z)= \Lambda_{m,n}^{q-2} (K)Y(w), $$
$$\int_{\Omega_{2q}} K(\langle w, z \rangle)\overline{Y(z)}d\sigma_q(z)= \Lambda_{n,m}^{q-2} (K)\overline{Y(w)}.$$\end{corollary}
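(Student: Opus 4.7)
The plan is to deduce all three identities from Theorem~\ref{teo(3.3)} by exploiting two symmetries already noted in the text: the antilinear bijection $Y\mapsto\overline{Y}$ from $\mathcal{H}_{m,n}(\Omega_{2q})$ onto $\mathcal{H}_{n,m}(\Omega_{2q})$, and the Hermitian symmetry $\langle w,z\rangle=\overline{\langle z,w\rangle}$ of the inner product on $\mathbb{C}^q$.

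For the first identity, I apply Theorem~\ref{teo(3.3)} to $\overline{Y}\in\mathcal{H}_{n,m}(\Omega_{2q})$ with bidegree $(n,m)$ in place of $(m,n)$. This yields
\[
\int_{\Omega_{2q}} K(\langle z,w\rangle)\,\overline{\overline{Y}(z)}\,d\sigma_q(z)=\Lambda_{n,m}^{q-2}(K)\,\overline{\overline{Y}(w)},
\]
and the cancellation $\overline{\overline{Y}}=Y$ delivers exactly the first identity.

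For the other two, I introduce the auxiliary kernel $K^{*}(u):=K(\overline{u})$, $u\in B_2$. Since the measure $d\nu_q$ is invariant under $u\mapsto\overline{u}$ (it depends on $u$ only through $|u|^2$), $K^{*}\in L^{1}(B_2,d\nu_q)$ whenever $K$ is. The Hermitian symmetry rewrites
\[
K(\langle w,z\rangle)=K(\overline{\langle z,w\rangle})=K^{*}(\langle z,w\rangle),
\]
so identity~(2) follows by applying identity~(1) to $K^{*}$, and identity~(3) follows by applying Theorem~\ref{teo(3.3)} itself to $K^{*}$, provided one establishes the two eigenvalue relations
\[
\Lambda_{n,m}^{q-2}(K^{*})=\Lambda_{m,n}^{q-2}(K)\qquad\text{and}\qquad \Lambda_{m,n}^{q-2}(K^{*})=\Lambda_{n,m}^{q-2}(K).
\]

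The only nontrivial step, and the main obstacle I anticipate, is verifying these eigenvalue relations. A direct attack through the cylinder formula~\eqref{eqno(3.6)} is awkward because of the $e^{i\theta}$ factor inside the argument of $K$. The cleaner route is to work with the equivalent disk-integral representation~\eqref{eqno(3.4)} of Proposition~\ref{prop(3.2)}, which is legitimate since Theorem~\ref{teo(3.3)} has just shown the two representations coincide. Performing the change of variable $u\mapsto\overline{u}$ in \eqref{eqno(3.4)} applied to $K^{*}$, combined with Lemma~\ref{lem(2.1)}(3) in the form $\overline{R_{m,n}^{q-2}(\overline{u})}=\overline{R_{n,m}^{q-2}(u)}$, reduces each of the two eigenvalue relations to a one-line calculation. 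With these in hand, identities~(2) and~(3) follow immediately from the reductions above.
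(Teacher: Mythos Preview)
Your proposal is correct and follows essentially the same route the paper indicates: the paper gives no detailed proof of the corollary, merely remarking that the identities follow from the conjugation bijection $\mathcal{H}_{m,n}(\Omega_{2q})\to\mathcal{H}_{n,m}(\Omega_{2q})$, and your argument is a faithful and complete expansion of that hint. As a minor aside, the cylinder formula \eqref{eqno(3.6)} is not really awkward here---the substitution $\theta\mapsto 2\pi-\theta$ together with Lemma~\ref{lem(2.1)}(3) handles the eigenvalue relations just as cleanly as the disk representation \eqref{eqno(3.4)}---but your choice is perfectly legitimate.
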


\section{Integrals relating the unit sphere to its subspheres}
\label{sec:3}

In this section we will obtain integral representations relating $L^1([0,1], dt)$, $L^1(B_q, d\eta)$, $L^1(\Omega_{2q},d\sigma_q)$ and $L^1(B_2,d\nu_q)$. This will also allow the construction of a kind of Funk-Hecke theorem for subspheres $\Omega_w^\gamma$.

\begin{proposition} \label{prop(4.1)}Let $q>1$ and $w$ be an element of $\Omega_{2q}$. If $f$ is a function of $L^1(\Omega_{2q},d\sigma_q)$, then
\begin{eqnarray*}\int_{\Omega_{2q}}f(z)d\sigma_q(z) & = & \int_0^{2\pi}\int_{B_q}f\left(\eta +e^{i\theta}\sqrt{1-\|\eta\|^2}\,\varepsilon_q\right)\,d\eta\,d\theta\\ &  =  & \int_0^{2\pi}\int_{B_q}f\left(\eta+e^{i\theta}\sqrt{1-\|\eta\|^2}\, w\right)d\eta d\theta,\end{eqnarray*}
where $\eta=(z_1, \ldots, z_{q-1})$ and $\varepsilon_q=(0,\ldots, 1)$.\end{proposition}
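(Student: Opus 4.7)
The proof is a direct consequence of Theorem \ref{teo(2.1)}, essentially applied twice. The first step is to observe that Theorem \ref{teo(2.1)} already furnishes the second equality in the statement: taking the pole to be an arbitrary $w \in \Omega_{2q}$ gives
\[
\int_{\Omega_{2q}} f(z)\,d\sigma_q(z) = \int_0^{2\pi}\int_{B_q} f\!\left(\eta + e^{i\theta}\sqrt{1-\|\eta\|^2}\,w\right)d\eta\,d\theta,
\]
where $\eta$ ranges over the unit ball of the orthogonal complement of $w$ in $\mathbb{C}^q$, canonically identified with $B_q \subset \mathbb{C}^{q-1}$.

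For the first equality, I would specialize Theorem \ref{teo(2.1)} to the distinguished choice $w = \varepsilon_q = (0,\ldots,0,1)$. In this case the orthogonal complement of $w$ in $\mathbb{C}^q$ is precisely the coordinate subspace $\{(z_1,\ldots,z_{q-1},0)\}$, so the identification $\eta \mapsto (\eta,0)$ is the natural one and the formula of Theorem \ref{teo(2.1)} becomes exactly
\[
\int_{\Omega_{2q}} f(z)\,d\sigma_q(z) = \int_0^{2\pi}\int_{B_q} f\!\left(\eta + e^{i\theta}\sqrt{1-\|\eta\|^2}\,\varepsilon_q\right)d\eta\,d\theta.
\]
This is precisely the identity \eqref{eqno(2.11)} derived immediately after the theorem, i.e., Rudin's formula.

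Chaining the two equalities, both cylindrical integrals coincide with $\int_{\Omega_{2q}} f\,d\sigma_q$, hence with each other. This also reflects the invariance of the sphere integral under unitary transformations: any isometry of $\mathbb{C}^q$ sending $\varepsilon_q$ to $w$ transports one cylindrical representation into the other while preserving both the Lebesgue measure on the corresponding copy of $B_q$ and the function $\|\eta\|$. There is no genuine obstacle here; the only subtlety to flag is the implicit identification of $B_q$ with the unit ball in $w^{\perp}$, which is why the formula is meaningful for arbitrary $w$ and not only for $\varepsilon_q$.
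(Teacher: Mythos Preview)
Your argument is correct and mirrors the paper's own proof: the first equality is precisely \eqref{eqno(2.11)} (Theorem~\ref{teo(2.1)} specialized to $w=\varepsilon_q$), and the second follows from Theorem~\ref{teo(2.1)} with general $w$, the two being linked through the common value $\int_{\Omega_{2q}} f\,d\sigma_q$. Your additional remarks on the identification of $B_q$ with the unit ball of $w^{\perp}$ and on unitary invariance are accurate elaborations but not strictly needed.
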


\begin{proof} The first equality follows from \eqref{eqno(2.11)}, while the second is a consequence of Theorem \ref{teo(2.1)}.\hfill\rule{2.5mm}{2.5mm}\end{proof}

The statement of Proposition \ref{prop(4.1)} for the special case of a bizonal function is dealt with  in the following propositions.

\begin{proposition} \label{prop(4.2)}Let $q>1$ and $w$ be an element of $\Omega_{2q}$. If $K \in L^1(B_2,d\nu_q)$, then the function $z \in \Omega_{2q} \mapsto K(\langle z,w\rangle)$ is $\sigma_q$-integrable and
\begin{eqnarray*} \int_{\Omega_{2q}}K(\langle z, w \rangle)\,d\sigma_q(z)  & = & \omega_q\int_{B_2}K(z)d\nu_q(z)\\
& = & \int_0^{2\pi}\int_{B_q} K\left(e^{i\theta}\sqrt{1-\|\eta\|^2}\right)d\eta\,d\theta\\
& = & \omega_{q-1}\int_0^{2\pi}\int_0^1K\left(te^{i\theta}\right)t(1-t^2)^{q-2}dt\,d\theta.\end{eqnarray*}\end{proposition}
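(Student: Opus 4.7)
The plan is to prove the three equalities as a short cascade built on results from Sections 2 and 3. The first equality is the complex Funk-Hecke formula applied to the trivial harmonic: take $Y\equiv 1\in\mathcal{H}_{0,0}(\Omega_{2q})$ in Proposition \ref{prop(3.2)}. Since $R_{0,0}^{q-2}\equiv 1$ (by the definition of $R_{m,n}^{q-2}$ together with the normalization $P_0^{(q-2,0)}(1)=1$), the eigenvalue in \eqref{eqno(3.4)} collapses to $\lambda_{0,0}^{q-2}(K)=\omega_q\int_{B_2}K(z)\,d\nu_q(z)$, and identity \eqref{eqno(3.3)} delivers the first equality. This application simultaneously certifies the $\sigma_q$-integrability of $z\mapsto K(\langle z,w\rangle)$.

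The second equality is a direct application of Theorem \ref{teo(2.1)} to the bizonal function $f(z)=K(\langle z,w\rangle)$. The essential point is the standing convention, made explicit in the proof of Theorem \ref{teo(2.1)}, that $\eta$ is identified with a vector in the hyperplane $w^\perp\subset\mathbb{C}^q$. Under this embedding $\langle\eta,w\rangle=0$ and $\langle w,w\rangle=1$, so
\[
\bigl\langle\eta+e^{i\theta}\sqrt{1-\|\eta\|^2}\,w,\,w\bigr\rangle \;=\; e^{i\theta}\sqrt{1-\|\eta\|^2},
\]
and substitution into Theorem \ref{teo(2.1)} yields the second formula immediately.

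The third equality is obtained from the second by polar integration in the $\eta$-variable. Since the integrand depends on $\eta$ only through $\|\eta\|$, identity \eqref{eqno2.9} reduces the integral over $B_q$ to $\omega_{q-1}\int_0^1 K\bigl(e^{i\theta}\sqrt{1-s^2}\bigr)\,s^{2q-3}\,ds$; the substitution $t=\sqrt{1-s^2}$, for which $s\,ds=-t\,dt$ and $s^{2q-4}=(1-t^2)^{q-2}$, rewrites the $s$-integral as $\int_0^1 K(te^{i\theta})\,t(1-t^2)^{q-2}\,dt$, producing the claimed expression.

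The only real obstacle is bookkeeping of constants and the implicit embedding $B_q\hookrightarrow w^\perp$; as an internal consistency check, direct conversion of $\omega_q\int_{B_2}K\,d\nu_q$ to polar form using \eqref{eqno(2.13)} should reproduce the third expression via
\[
\omega_q\cdot\frac{q-1}{\pi}\;=\;\frac{2\pi^q}{(q-1)!}\cdot\frac{q-1}{\pi}\;=\;\frac{2\pi^{q-1}}{(q-2)!}\;=\;\omega_{q-1},
\]
closing the loop between the first and third expressions and confirming that all three are manifestly the same integral.
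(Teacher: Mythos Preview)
Your proposal is correct and follows essentially the same route as the paper, which disposes of the proposition in one line by citing Theorem~\ref{teo(3.3)} and Proposition~\ref{prop(3.2)} (i.e., the two Funk--Hecke formulas evaluated at $Y\equiv 1$, $m=n=0$). The only cosmetic difference is that for the second equality you appeal directly to Theorem~\ref{teo(2.1)} rather than to Theorem~\ref{teo(3.3)} with $Y\equiv 1$; since the $\Lambda_{0,0}^{q-2}$ of Theorem~\ref{teo(3.3)} is precisely the cylinder integral and its proof rests on Theorem~\ref{teo(2.1)}, this is the same argument unwrapped, and your explicit derivation of the third line (together with the constant check $\omega_q(q-1)/\pi=\omega_{q-1}$) simply fills in what the paper leaves implicit.
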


\begin{proof} The equalities cames of Theorem \ref{teo(3.3)} and Proposition \ref{prop(3.2)}. \end{proof}

\begin{proposition}\cite[Formula 1.4.3.]{rudin}  \label{prop(4.3)}Let $q>0$ and $K$ be a continuous function over $B_{q+1}$. If $w$ is an element in $B_{q+1}$, then
\begin{eqnarray*}\int_{B_{q+1}}K\left(\langle \eta, w \rangle\right)d\eta= \int_0^1\left[\int_{\Omega_{2q}}K(\langle r z,w \rangle)d\sigma_q(z)\right]r^{2q-1}dr. \label{eqno(4.1)}\end{eqnarray*}\end{proposition}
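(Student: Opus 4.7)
The statement is essentially a direct specialization of the polar coordinate identity \eqref{eqno2.9} in the excerpt, so the plan is extremely short. First I would verify the integrability hypothesis: since $K$ is continuous on the compact set $B_{q+1}$ it is bounded, and the map $\eta \mapsto \langle \eta, w\rangle$ is continuous from $B_{q+1}$ to $B_2$ (because $|\langle \eta,w\rangle|\le \|\eta\|\|w\|\le 1$ by Cauchy--Schwarz), so the composite $f(\eta):=K(\langle \eta,w\rangle)$ is continuous on $B_{q+1}$, hence bounded, hence an element of $L^1(B_{q+1},d\eta)$.

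Next I would apply the polar decomposition \eqref{eqno2.9} directly to $f$. Substituting $f(rz)=K(\langle rz, w\rangle)$ into the right-hand side of \eqref{eqno2.9} yields exactly the identity claimed in Proposition \ref{prop(4.3)}:
\begin{eqnarray*}
\int_{B_{q+1}} K(\langle \eta, w\rangle)\,d\eta
= \int_0^1\left[\int_{\Omega_{2q}} K(\langle rz, w\rangle)\,d\sigma_q(z)\right] r^{2q-1}\,dr.
\end{eqnarray*}

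There is essentially no obstacle here; the only subtlety worth flagging is that $\langle \eta, w\rangle$ lies in $B_2$ rather than in $[-1,1]$ (so this is not the real inner product case), but this plays no role since $K$ is assumed to be defined and continuous on all of $B_{q+1}$, and in particular on the disk of values attained by $\langle \cdot, w\rangle$. Accordingly, my written proof would consist of just the integrability observation followed by invoking \eqref{eqno2.9}.
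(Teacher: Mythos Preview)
Your proposal is correct. The paper does not supply its own proof of Proposition~\ref{prop(4.3)}; it simply cites \cite[Formula 1.4.3]{rudin}. Your argument---checking that $\eta\mapsto K(\langle\eta,w\rangle)$ is continuous (hence integrable) on $B_{q+1}$ and then applying the polar-coordinate identity \eqref{eqno2.9} with $f(\eta)=K(\langle\eta,w\rangle)$---is exactly the natural derivation, and indeed \eqref{eqno2.9} is itself quoted from the same page of Rudin, so this is essentially the intended justification.
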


In particular, when $K$ is a continuous radial function the previous proposition implies that
\begin{eqnarray*}\int_{B_{q+1}}K(\|\eta\|)d\eta= \omega_q\int_0^1K(r)\,r^{2q-1}dr, \quad K(\|\cdot\|) \in L^1(B_2,dz).\label{eqno(4.2)}\end{eqnarray*}

Using the orthogonality relations for disk polynomials (\cite{Boyd}), we see that Proposition \ref{prop(4.2)} also implies
the following orthogonality property of these polynomials over the cylinder $[0, 2\pi] \times B_q$:
\begin{eqnarray*}\int_0^{2\pi}\hspace{-2mm}\int_{B_q}\hspace{-2mm} R_{\mu,\nu}^{q-2}\left(\sqrt{1-\|\eta\|^2}\,e^{i\theta}\right)\overline{R_{m,n}^{q-2}\left(\sqrt{1-\|\eta\|^2}\,e^{i\theta}\right)}\,d\eta\, d\theta
=c(m,n, q)\delta_{\mu m}\delta_{\nu n},\end{eqnarray*}
where $\delta_{\mu m}$ is  Kronecker's delta function and
$$c(m,n,q)=\frac{2\pi^q m!n!(q-2)!}{(m+n+q-1)(m+q-2)!(n+q-2)!}.$$
Then  Lemma \ref{lem(2.1)} implies that
\begin{eqnarray*}\int_{B_q} R_{\mu,\nu}^{q-2}\left(\sqrt{1-\|\eta\|^2}\right)\overline{R_{m,n}^{q-2}\left(\sqrt{1-\|\eta\|^2}\right)}\,d\eta
=\frac{c(m,n, q)}{2\pi}\delta_{\mu -\nu}\delta_{m-n}.\end{eqnarray*}

Our last result is a prototype of the Funk-Hecke formula for subspheres $\Omega_w^\gamma$.
\vspace{3mm}

\begin{theorem} \label{teo(4.3)}Let $q>1$ and consider $w$ in $\Omega_{2q}$.
If $K$ is a complex function over $B_2$ and $Y$ in $\mathcal{H}_{m,n}(\Omega_{2q})$, then
\begin{eqnarray}\int_{\Omega_w^\gamma} K(\langle z, w \rangle)\overline{Y(z)} d\sigma_w^\gamma(z) = \Upsilon_{m,n}^\gamma(K)\overline{Y(w)}, \quad \gamma \in B_2,\label{eno(4.3)} \end{eqnarray}
where
\begin{eqnarray}\Upsilon_{m,n}^\gamma(K) = \omega_{q-1}(1-|\gamma|^2)^{(2q-3)/2}K(\gamma)\overline{R_{m,n}^{q-2}(\gamma)}.\label{eqno(4.4)}\end{eqnarray}\end{theorem}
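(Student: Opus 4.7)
The plan is to combine a cylindrical slicing of $\Omega_{2q}$ by the subspheres $\Omega_w^\gamma$ with the complex Funk-Hecke formula of Proposition~\ref{prop(3.2)}, and then read off the subsphere integral by varying the kernel.

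The key opening observation is that $\langle z,w\rangle=\gamma$ is constant on $\Omega_w^\gamma$ by the definition \eqref{def_subsphere}, so $K(\langle z,w\rangle)=K(\gamma)$ can be pulled out of the integral; identities \eqref{eno(4.3)}--\eqref{eqno(4.4)} therefore reduce to proving
\begin{eqnarray*}
\phi(\gamma):=\int_{\Omega_w^\gamma}\overline{Y(z)}\,d\sigma_w^\gamma(z)=\omega_{q-1}(1-|\gamma|^2)^{(2q-3)/2}\,\overline{R_{m,n}^{q-2}(\gamma)}\,\overline{Y(w)}.
\end{eqnarray*}

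Next I derive a coarea formula from \eqref{eqn(2.8)} and \eqref{eqno(2.4)}. Writing $\gamma=te^{i\theta}$ in \eqref{eqn(2.8)} turns $t\,dt\,d\theta$ into the Lebesgue area $d\gamma$ on $B_2$ and produces $d\sigma_q=(1-|\gamma|^2)^{q-2}\,d\gamma\,d\sigma_{q-1}$; dividing out the density $(1-|\gamma|^2)^{(2q-3)/2}$ furnished by \eqref{eqno(2.4)} yields
\begin{eqnarray*}
\int_{\Omega_{2q}}F(z)\,d\sigma_q(z)=\int_{B_2}(1-|\gamma|^2)^{-1/2}\int_{\Omega_w^\gamma}F(z)\,d\sigma_w^\gamma(z)\,d\gamma.
\end{eqnarray*}
Applied to $F(z)=K(\langle z,w\rangle)\overline{Y(z)}$ (with $K(\gamma)$ factored as above) and combined with Proposition~\ref{prop(3.2)} after rewriting $d\nu_q$ via \eqref{eqno(2.13)} and using the arithmetic identity $\omega_q(q-1)/\pi=\omega_{q-1}$, this gives
\begin{eqnarray*}
\int_{B_2}K(\gamma)(1-|\gamma|^2)^{-1/2}\phi(\gamma)\,d\gamma=\omega_{q-1}\overline{Y(w)}\int_{B_2}K(\gamma)\overline{R_{m,n}^{q-2}(\gamma)}(1-|\gamma|^2)^{q-2}\,d\gamma
\end{eqnarray*}
for every $K\in L^1(B_2,d\nu_q)$. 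Since this identity holds in particular for every $K\in C(\overline{B_2})$ and the two functions multiplying $K$ are $d\gamma$-integrable, a standard density/duality argument forces them to agree a.e.; continuity of $\phi$ on $\overline{B_2}$, inherited from the polynomial expression of $Y(\gamma w+\sqrt{1-|\gamma|^2}\,z')$ in $\gamma,\overline{\gamma},z',\overline{z'}$, upgrades this to pointwise equality, giving the desired formula for $\phi$. The boundary $|\gamma|=1$ is automatic since $(1-|\gamma|^2)^{(2q-3)/2}=0$ and $\Omega_w^\gamma$ degenerates. Restoring $K(\gamma)$ completes the proof.

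The main obstacle is the last step, passing from an integrated identity valid for every admissible $K$ to the pointwise formula for $\phi$; it rests on the continuity of $\phi$ together with a routine density/duality argument in $L^1(B_2,d\gamma)$. A self-contained alternative is the invariance route: averaging $Y\circ U$ over the stabilizer $U_w=\{U\in U(q):Uw=w\}$ projects $Y$ onto the one-dimensional zonal subspace of $\mathcal{H}_{m,n}(\Omega_{2q})$, spanned by $z\mapsto R_{m,n}^{q-2}(\langle z,w\rangle)$, with coefficient $Y(w)$, reducing the integral to the zonal case. However, the one-dimensionality of this zonal subspace is representation-theoretic input not developed in the excerpt, so the Funk-Hecke-plus-coarea route is preferable as it uses only machinery already at hand.
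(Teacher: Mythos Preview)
Your argument is correct and follows the same overall strategy as the paper: reduce to the subsphere average $\phi(\gamma)$ by pulling out $K(\gamma)$, slice $\Omega_{2q}$ into the subspheres $\Omega_w^\gamma$, and compare with the Funk--Hecke formula. The one substantive difference is at the ``comparison'' step. The paper writes the single integral $\int_{\Omega_{2q}}\overline{Y}\,d\sigma_q$ in two ways (via \eqref{eqn(2.8)} and via Theorem~\ref{teo(3.3)} together with Proposition~\ref{prop(4.2)}) and then simply asserts that the two inner integrands agree; as written this is not justified, since equality of two iterated integrals over $(t,\theta)$ does not force equality of the integrands. You close this gap by keeping a general test kernel $K$ throughout, so that the identity $\int_{B_2}K(\gamma)\bigl[(1-|\gamma|^2)^{-1/2}\phi(\gamma)-\omega_{q-1}\overline{Y(w)}\,\overline{R_{m,n}^{q-2}(\gamma)}(1-|\gamma|^2)^{q-2}\bigr]\,d\gamma=0$ holds for every $K\in C(\overline{B_2})\subset L^1(B_2,d\nu_q)$, and then invoke the standard $L^1$-duality/continuity argument. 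So your route is the paper's route, but with the key inference made rigorous.
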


\begin{proof} We consider $Y \in  \mathcal{H}_{m,n}(\Omega_{2q})$ and $K: B_2 \to \mathbb{C}$ a function. Applying the decomposition \eqref{eqno(2.4)} it follows that for each $\gamma \in B_2$
\begin{eqnarray}\int_{\Omega_w^\gamma}K(\langle z, w \rangle)\overline{Y(z)} d\sigma_w^\gamma(z)  =  (1-|\gamma|^2)^{(2q-3)/2}K(\gamma)\int_{\Omega_{2q-2}}\hspace{-3mm}\widetilde{Y}(z'')d\sigma_{q-1}(z''),\label{eno(4.5)}\end{eqnarray}
where $\widetilde{Y}(z'')=\overline{Y}(\gamma w + \sqrt{1-|\gamma|^2}\,z'')$.
To compute the last integral we use decomposition \eqref{eqn(2.8)}, obtaining
$$\int_{\Omega_{2q}}\overline{Y(z)} d\sigma_q(z) = \int_0^{2\pi}\int_0^1 \left[\int_{\Omega_{2q-2}}\widetilde{Y}(z'') d\sigma_{q-1}(z'')\right] t(1-t^2)^{q-2}dt d\theta,\, w \in \Omega_{2q}.$$
On the other hand, Theorem \ref{teo(3.3)} and the previous proposition imply that
$$ \int_{\Omega_{2q}}\overline{Y(z)} d\sigma_q(z) = \int_0^{2\pi}\hspace{-2mm}\int_0^1\left[\omega_{q-1} \overline{R_{m,n}^{q-2}(te^{i\theta})}\overline{Y(w)}\right]t(1-t^2)^{q-2}dtd\theta.
$$
Comparing these integrals we see that
$$\int_{\Omega_{2q-2}}\widetilde{Y}(z'')d\sigma_{q-1}(z'')=\omega_{q-1} \overline{R_{m,n}^{q-2}(te^{i\theta)}}\overline{Y(w)}.$$
Inserting the last integral  in \eqref{eno(4.5)} we obtain formula \eqref{eno(4.3)}, concluding the proof. \end{proof}

\noindent{\bf Remark.} Apart from being defined on $B_2$, no hypothesis on $K$ is necessary for the validity of \eqref{eno(4.3)} and \eqref{eqno(4.4)}. This holds because $\langle z, w \rangle = \gamma$ is constant over $\Omega_w^\gamma$ where integration is being performed. Thus the presence of  $K$ in the statement of Theorem \ref{teo(4.3)} is not strictly necessary; however, we chose to state it in these terms so that it acts as a prototype of the Funk-Hecke formula for subspheres.
\vspace{2mm}

We next point out some results which are immediately derived from the above theorem. It is noteworthy to observe that
\begin{eqnarray}\sigma_w^\gamma(\Omega_w^\gamma)= \frac{2\pi^{q-1}}{(q-2)!} (1 - |\gamma|^2)^{(2q-3)/2}, \quad \gamma \in B_2,\, w \in \Omega_{2q}. \label{eqno(4.6)}\end{eqnarray}
In particular, when $\gamma=0$ the constant above is the measure of the sphere $\Omega_{2q-2}$, which is given by \eqref{eqno(1.9)}. For the case $\gamma=1$, we have $\sigma_w^1(\Omega_w^1)=0$, which coincides with the measure of the set $\Omega_w^1=\{w\}$.

In addition, Theorem \ref{teo(4.3)} provides an interesting geometric interpretation of the Funk-Hecke formula. The mean value property for integrals of $Y \in \mathcal{H}_{m,n}(\Omega_{2q})$ over the subsphere $\Omega_w^\gamma$ may be expressed by
\begin{eqnarray*}\int_{\Omega_w^\gamma} Y(z)\, d\sigma_w^\gamma(z) = \sigma_w^\gamma(\Omega_w^\gamma)R_{m,n}^{q-2}(\gamma) Y(w), \quad \gamma \in B_2, \, w \in \Omega_{2q}.\label{eqno(4.7)}\end{eqnarray*}

The mean value property appears in many problems associated to shift operators or to convolution operators over $\Omega_w^\gamma$.

The  analog of Theorem \ref{teo(4.3)} when the argument of $K$ is now allowed to depend on a fixed point $w' \in \Omega_{2q}$ distinct from $w$  remains an open and interesting problem.

\begin{acknowledgements}
The first author was supported by CAPES - BEX 10884/13-0.\\ The secund author was partially supported by Funda\c{c}\~{a}o para a Ciencia e Tecnologia,
PEst-OE/MAT/UI0209/2011.
%If you'd like to thank anyone, place your comments here
%and remove the percent signs.
\end{acknowledgements}

% BibTeX users please use one of
%\bibliographystyle{spbasic}      % basic style, author-year citations
%\bibliographystyle{spmpsci}      % mathematics and physical sciences
%\bibliographystyle{spphys}       % APS-like style for physics
%\bibliography{}   % name your BibTeX data base

\begin{thebibliography}{1}
\bibitem{axler} Axler, S.; Bourdeon, P. and Ramey, W., {\it Harmonic function theory}. Second edition, Graduate Texts in Mathematics, 137. Springer-Verlag, New York, 2001.

\bibitem{Boyd} Boyd, J. N. and Raychowdhry, P. N., {\it Zonal harmonic functions from two-dimensional analogs of Jacobi polynomials.} Applicable Anal., vol. 16, no. 3, 243--259, 1983.

\bibitem{Dai} Dai, F. and Xu, Y., {\it Approximation Theory and Harmonic Analysis on Spheres and Balls.} Springer Monographs in Mathematics, XVIII, 2013.

\bibitem{folland} Folland, G. B., {\it Real analysis}. Modern techniques and their applications, John Wiley $\&$ Sons, New York, 1999.

\bibitem{funk} Funk, P., {\it Beitr\"{a}ge zur theorie der kugelfunktionen.} Math. Ann., vol. 77, no. 1, 136--152, 1915.

\bibitem{groemer} Groemer, H., {\it Geometric aplications of Fourier series and spherical harmonics}. Encyclopedia of mathematics and its applications, Cambridge University Press, 1996.

\bibitem{hecke} Hecke, E., {\it \"{U}ber orthogonal-invariante Integralgleichungen.} Math. Ann., vol. 78, no. 1, 398--404, 1917.

\bibitem{Ikeda} Ikeda, M. {\it On spherical functions for the unitary group I. General theory}, Mem. Fac.
Engrg. Hiroshima Univ., vol. 3, p. 17--29, 1967.

\bibitem{Ikeda2} Ikeda, M. {\it On spherical functions for the unitary group II. The case of two dimensions},
Mem. Fac. Engrg. Hiroshima Univ., vol. 3, p. 31--53, 1967.

\bibitem{Ikeda3} Ikeda, M. {\it On spherical functions for the unitary group III. The case of three dimensions},
Mem. Fac. Engrg. Hiroshima Univ., vol. 3, p. 55--75, 1967.

\bibitem{IkedaKayama} Ikeda, M. and Kayama, T., {\it On spherical functions for the unitary group IV. The
case of higher dimensions}, Mem. Fac. Engrg. Hiroshima Univ., vol. 3, p. 77--100, 1967.

\bibitem{koorn} Koornwinder, T. H., {\em The addition formula for Jacobi Polynomials II.
The Laplace type integral representation an the product formula}, Math. Centrum Amsterdam, Report TW133, 1972.

\bibitem{Koorn2} Koornwinder, T. H., {\it The addition formula for Jacobi Polynomials III.
Completion of the proof}, Math. Centrum Amsterdam, Report TW135, 1972.

\bibitem{muller} M\"{u}ller, C., {\it Analysis of spherical symmetries in Euclidean spaces.} Applied Mathematical Sciences, 129. Springer-Verlag, New York, 1998.

\bibitem{oliveira} Oliveira, C. P. and Menegatto, V. A., {\em Annihilating properties of convolution operators on complex spheres}. Analysis Mathematica, v. 31, p. 13--30, 2005.

\bibitem{quinto} Quinto, E. T., {\it Injectivity of rotation invariant Radon transform on complex hyperplanes in $\mathbb{C}^n$.} Integral Geometry (Brunswick, Maine, 1984), in: Contemp. Math., vol. 63, Amer. Math. Soc., (Province, RI, 1987), p. 245--260.

\bibitem{rudin} Rudin, W., {\em Function theory in the unit ball of $\mathbb{C}^n$}. Springer-Verlag, new York-Berlin, 1980.

\bibitem{szego} Szeg\"{o}, G.,  {\it Orthogonal Polynomials}. American Mathematical Society. Colloq. Publ. vol. 23, Providence, RI, 1975.

\bibitem{xu} Xu, Y., {\it Funk-Hecke formula for orthogonal polynomials on spheres and on balls.} Bull. London Math. Soc., 32, p. 447--457, 2000.
\end{thebibliography}

% Non-BibTeX users please use

\end{document}